\documentclass{article}

\usepackage[utf8]{inputenc}		

\usepackage{amsmath}
\usepackage{amsfonts}
\usepackage{amssymb}
\usepackage{amsthm}

\usepackage{graphicx}
\usepackage{xcolor}

\usepackage{epsfig}
\usepackage{amsbsy}
\usepackage{textcomp}
\usepackage{comment}
\usepackage{hyperref}
\usepackage[all]{hypcap}
\usepackage{aliascnt}

\theoremstyle{plain}
\newtheorem{thm}{Theorem}[section]
\newaliascnt{lem}{thm}
\newtheorem{lem}[lem]{Lemma}
\aliascntresetthe{lem}
\newaliascnt{pro}{thm}

\aliascntresetthe{pro}
\newaliascnt{cor}{thm}
\newtheorem{cor}[cor]{Corollary}
\aliascntresetthe{cor}
\newaliascnt{que}{thm}

\aliascntresetthe{que}
\newaliascnt{rem}{thm}

\aliascntresetthe{rem}
\newaliascnt{con}{thm}
\newtheorem{con}[con]{Conjecture}
\aliascntresetthe{con}
\newcounter{clmC}[thm]
\newtheorem{clm}[clmC]{Claim}
\newtheorem*{clmnn}{Claim} 
\newtheorem*{LinkageThm}{Linkage Theorem}
\newtheorem*{thm1}{\autoref{thm:dagger2matroid}}
\newtheorem*{thm2}{\autoref{thm:noAC2Gammoid}}
\newtheorem{mainthm}{Theorem}

\theoremstyle{definition}
\newaliascnt{defi}{thm}
\newtheorem{defi}[defi]{Definition}
\aliascntresetthe{defi}
\newaliascnt{exm}{thm}
\newtheorem{exm}[exm]{Example}
\aliascntresetthe{exm}

\newcommand{\mcm}[3]{\newcommand{#1}[#2]{{\ensuremath{#3}}}} 

\mcm{\tuple}{1}{\langle #1 \rangle}
\mcm{\name}{1}{\ulcorner #1 \urcorner}
\mcm{\Fbb}{0}{\mathbb{F}}
\mcm{\Nbb}{0}{\mathbb{N}}
\mcm{\Zbb}{0}{\mathbb{Z}}
\mcm{\Rbb}{0}{\mathbb{R}}
\mcm{\Cbb}{0}{\mathbb{C}}
\mcm{\Acal}{0}{\cal A}
\mcm{\Bcal}{0}{\cal B}
\mcm{\Ccal}{0}{\cal C}
\mcm{\Dcal}{0}{\cal D}
\mcm{\Ecal}{0}{\cal E}
\mcm{\Fcal}{0}{\cal F}
\mcm{\Gcal}{0}{\cal G}
\mcm{\Hcal}{0}{\cal H}
\mcm{\Ical}{0}{\cal I}
\mcm{\Lcal}{0}{\cal L}
\mcm{\Mcal}{0}{\cal M}
\mcm{\Wcal}{0}{\cal W}
\mcm{\Ncal}{0}{\cal N}
\mcm{\Pcal}{0}{{\cal P}}
\mcm{\Qcal}{0}{{\cal Q}}
\mcm{\Scal}{0}{{\cal S}}
\mcm{\Tcal}{0}{{\cal T}}
\mcm{\Ucal}{0}{{\cal U}}
\mcm{\Vcal}{0}{{\cal V}}
\mcm{\Mfrak}{0}{\mathfrak M}

\mcm{\restric}{0}{\upharpoonright}
\mcm{\upset}{0}{\uparrow}
\mcm{\onto}{0}{\twoheadrightarrow}
\mcm{\smallNbb}{0}{{\small \mathbb{N}}}
\DeclareMathOperator{\preop}{op}
\mcm{\op}{0}{^{\preop}}

{\begin{array}{c}
\setlength{\unitlength}{1em}}%
{\end{array}}

\renewcommand{\-}{\setminus}
\newcommand{\Ter}{\mbox{Ter}}
\newcommand{\Ini}{\mbox{Ini}}
\newcommand{\ini}{\mbox{Ini}}

\begin{document}
\author{Hadi Afzali \and Hiu-Fai Law \and Malte M\"uller\thanks{Fachbereich Mathematik, Universit\"at Hamburg}}

\title{Infinite strict gammoids}
\date{\today}
\maketitle
\setcounter{section}{-1}

\begin{abstract}
Finite strict gammoids, introduced in the early 1970's, are matroids defined via finite digraphs equipped with some set of sinks:~a set of vertices is independent if it admits a linkage to these sinks. An independent set is maximal precisely if it admits a linkage onto the sinks. 

In the infinite setting, this characterization of the maximal independent sets need not hold. 
We identify a type of substructure as the unique obstruction to the characterization. We then show that the sets linkable onto the sinks form the bases of a (possibly non-finitary) matroid precisely when the substructure does not occur. 
\end{abstract}

\section{Introduction}
Infinite matroid theory has seen vigorous development since Bruhn et al \cite{BDKPW} in 2010 gave five equivalent sets of axioms for infinite matroids in response to a problem proposed by Rado \cite{Rad66} (see also Higgs \cite{Hig69} and Oxley \cite{Oxl78}). 
In this paper, we continue this ongoing project by focusing on the strict gammoids, which originated from the transversal matroids introduced by Edmonds and Fulkerson \cite{EF65}. A transversal matroid can be defined by taking as its independent sets the subsets of a fixed vertex class of a bipartite graph matchable to the other vertex class. 
Perfect \cite{Per68} generalized transversal matroids to gammoids by replacing matchings in bipartite graphs with disjoint directed paths in digraphs. 
Later, Mason \cite{Mas72} started the investigation of a subclass of gammoids known as {\it strict\/}  gammoids. 

To be precise, 
let a \emph{dimaze} (short for \emph{di}rected \emph{maze}) be a digraph with a fixed subset of the vertices of out-degree 0, called \emph{exits}. A set of vertices of (the digraph of) the dimaze is called \emph{independent} if it is \emph{linkable} to the exits by a \emph{linkage}, i.e.~a collection of disjoint directed paths. The set of all linkable sets is the \emph{linkability system} of the dimaze. 
Any matroid defined by (the linkability system on the vertex set of) a dimaze is a \emph{strict gammoid}. 

Mason \cite{Mas72} proved that every finite dimaze defines a matroid. 
When a dimaze is infinite, Perfect \cite{Per68} gave sufficient conditions for when  some subset of the linkability system gives rise to a matroid. Any such matroid is \emph{finitary}, in the sense that a set is independent as soon as all its finite subsets are. 
Since finitary matroids were the only ones known at that time, infinite dimazes whose linkability systems are non-finitary\footnote{For example, in the dimaze on an infinite star directed from the centre towards the exits at the leaves, every finite set of vertices is independent, but the whole vertex set is not.} were not considered to define matroids. 

With infinite matroids canonically axiomatized in a way that allows for non-finitary matroids, a natural question is whether every infinite dimaze now defines a matroid. In general, the answer to this question is still negative, as the linkability system may fail to satisfy one of the infinite matroid axioms (IM), which asks for the existence of certain maximal independent sets. 
Observe that in any finite dimaze, a set is linkable onto the exits if and only if it is maximally independent.\footnote{Note that an independent set that is not linkable onto the exits admits a linkage which misses some of the exits. Adding these exits via trivial paths shows that the independent set is not maximal.} 
However, in an infinite dimaze, sets which are linkable onto the exits need not be maximally independent. But if they all are, the dimaze defines a matroid:

\begin{thm1}
Given a dimaze, suppose that every set that is linkable onto the exits is maximally independent. Then the linkability system is the set of independent sets of a matroid. 
\end{thm1} 

So the question arises:~in which dimazes is every set that is linkable onto the exits maximally independent? 
Investigation of this question leads us to the following example of a dimaze. 
An \emph{alternating ray} is a digraph having infinitely many vertices of in-degree 2 such that the underlying graph is a \emph{ray}, i.e.~an 1-way infinite path. 
An \emph{alternating comb} is a dimaze constructed by linking all the vertices of in-degree 2 and the first vertex of an alternating ray onto a set of exits by (possibly trivial) disjoint directed paths which meet the ray exactly at their initial vertices (see for example \autoref{fig:ARIC}a). 
It will be easy to see that, in an alternating comb, the set of vertices of out-degree 2 can be linked to the exits in two different ways, either onto or to a proper subset; hence, the set is not maximally independent and yet is linkable onto the exits. 

By proving that alternating combs form the unique obstruction to the  characterization of maximal independent sets as sets linkable onto the exits, we are able to prove the following theorem. When we say that a dimaze \emph{contains} an alternating comb, we require that the set of exits of the former includes that of the latter. 

\begin{thm2}
Given a dimaze, the vertex sets linkable onto the exits form the bases of a matroid if and only if the dimaze contains no alternating comb. The independent sets of this matroid are precisely the linkable sets of vertices.
\end{thm2}

The non-trivial direction of \autoref{thm:noAC2Gammoid} implies that a dimaze whose linkability system fails to define a matroid contains an alternating comb. 
Conversely, a dimaze containing an alternating comb may still define a matroid. However, the set of bases of such a matroid is a proper subset of the sets linkable onto the exits, and can be difficult to describe.  

\bigskip
We collect definitions and give examples of infinite dimazes which do not define a matroid in \autoref{sec:preliminaries}.  
In \autoref{sec:strictGammoids}, we first prove that the independence augmentation axiom holds in general. 
Then after rephrasing a proof of the linkage theorem of Pym \cite{Pym69}, we 
prove Theorems 1 and 2.  
In \autoref{sec:withAR}, we construct a strict gammoid which cannot be defined by any dimaze without alternating combs. 
There are two intermediate steps, one leads to a connectivity result; the other shows that every infinite tree, viewed as a bipartite graph, gives rise to a transversal matroid, a statement which is of independent interest.

\section{Preliminaries} \label{sec:preliminaries}
In this section, we present relevant  definitions. For notions not found here, we refer to \cite{BDKPW} and \cite{Oxl92} for  matroid theory, and \cite{Die10} for graph theory.

Given a set $E$ and a family of subsets $\Ical\subseteq 2^E$, let $\Ical^{\max}$ denote the maximal elements of $\Ical$ with respect to set inclusion. For a set $I\subseteq E$ and $x\in E$, we also write $I+x, I-x$ for $I\cup \{x\}$ and $I\-\{x\}$ respectively. 
\begin{defi}\cite{BDKPW}
A \emph{matroid} $M$ is a pair $(E, \Ical)$ where $E$ is a set and $\Ical\subseteq 2^E$ which satisfies the following:
\begin{samepage}
\begin{enumerate}
\item[(I1)] $\emptyset\in \Ical$.
\item[(I2)] If $I\subseteq I'$ and $I'\in\Ical$, then $I\in \Ical$.
\item[(I3)] For all $I\in \Ical \setminus\Ical^{\max}$ and $I'\in \Ical^{\max}$, there is an $x\in I'\setminus I$ such that $I+x\in \Ical$.
\item[(IM)] Whenever $I\in \Ical$ and $I\subseteq X\subseteq E$, the set $\{I'\in \Ical : I\subseteq I'\subseteq X\}$ has a maximal element. 
\end{enumerate}
\end{samepage}
\end{defi}

The set $E$ is the \emph{ground set} and the elements in $\Ical$ are the {\it independent sets} of $M$. 
Equivalently, matroids can be defined with base axioms. A collection of subsets $\Bcal$ of $E$ is the set of bases of a matroid if and only if the following three axioms hold:
\begin{enumerate}
\item[(B1)] $\Bcal\neq \emptyset$.
\item[(B2)] Whenever $B_1, B_2\in\Bcal$ and $x\in B_1\- B_2$, there is an element $y$ of $B_2\- B_1$ such that $(B_1-x)+y\in \Bcal$.
\item[(BM)] The set $\Ical$ of all subsets of elements in $\Bcal$ satisfies (IM).
\end{enumerate}

The ground sets we consider will be sets of vertices of digraphs or  bipartite graphs. Unless there is danger of confusion of the ground set, we identify a matroid with its set of independent sets.

\medskip
\begin{defi}
\label{def:dimaze}
Let $D$ be a digraph and $B_0$ a subset of the vertices the of out-degree $0$.\footnote{The assumption on $B_0$ incurs no loss of generality, as we may delete the out-going edges from $B_0$ without changing the linkability system. Moreover, this assumption excludes unwanted trivial cases in subsequent constructions by forcing vertices having out-going edges to lie outside $B_0$. 
} The pair $(D, B_0)$ is called a \emph{dimaze} and $B_0$ is the set of \emph{exits}. 
\end{defi}

Given a dimaze $(D, B_0)$, a \emph{linkage} $\Pcal$ is a set of disjoint directed paths such that the terminal vertex of each path is in $B_0$. Let $\Ini(\Pcal)$ and $\Ter(\Pcal)$ be respectively the set of initial vertices and that of terminal vertices of paths in $\Pcal$. A set $I\subseteq V(D)$ is \emph{linkable} or \emph{independent} if there is a linkage $\Pcal$ \emph{from} $I$, i.e.~$\Ini(\Pcal)=I$. Suppose further that $\Ter(\Pcal)=B_0$, then $I$ is linkable \emph{onto} $B_0$. 
The collection of linkable sets is called the \emph{linkability system}. 
Note that:
\begin{align}
\label{eqn:linkableOnto}
\mbox{Any linkable set in $(D, B_0)$ can be extended to one linkable onto $B_0$.} 
\end{align}

\begin{defi}
\label{def:linkabilitySystem}
Given a dimaze $(D, B_0)$, we denote the pair $V:=V(D)$ and the linkability system by $M_L(D, B_0)$. (The linkability system of) a dimaze \emph{defines} a matroid if $M_L(D, B_0)$ is a matroid.\footnote{ 
In particular, $B_0$ is always a base when $M_L(D, B_0)$ is a matroid.} Any matroid arising in this way is called a {\it strict gammoid}. 
\end{defi}

Different dimazes may define the same strict gammoid. For example, a rank one uniform matroid can be defined via a star or a path. See \autoref{fig:ARIC} for another pair of examples.

Recall that given $A, B\subseteq V$, an \emph{A--B separator} $S$ is a set of vertices such that there are no paths from $A$ to $B$ avoiding $S$. A separator is \emph{on} a linkage $\Pcal$ if it consists of exactly one vertex on each path in $\Pcal$. The celebrated Aharoni-Berger-Menger's theorem \cite{AB09} states that there exist a linkage from a subset of $A$ to $B$ and an $A$--$B$ separator on this linkage.

 Mason \cite{Mas72} (see also \cite{Per68}) showed that given a finite digraph $D$, for any $B_0\subseteq V$, $M_L(D, B_0)$ is a matroid. However, this is not the case for infinite digraphs. 
For example, let $D$ be a complete bipartite graph between an uncountable set $X$ and a countably infinite set $B_0$ with all the edges directed towards $B_0$. Then $I\subseteq X$ is independent if and only if $I$ is countable, so there is not any maximal independent set in $X$. Hence, $M_L(D, B_0)$ does not satisfy the axiom (IM). 

\begin{exm}
\label{exm:halfGrid}
A counterexample with a locally finite digraph is the half-grid. Define a digraph $D$ by directing upwards or leftwards the edges of the subgraph of the grid $\mathbb Z_{\geq 0} \times \mathbb Z_{\geq 0}$ 
induced by $\{(x, y): y>0 \mbox{ and }y\geq x\geq 0\}$. The \emph{half-grid} is the dimaze $(D, B_0)$ where $B_0:=\{(0, y): y  > 0\}$; see \autoref{fig:halfGrid}. Then $I:=\{(x, x): x>0\}$ is linkable onto a set $J\subseteq B_0$ if and only if $J$ is infinite. Therefore, $I\cup (B_0\- J)$ is independent if and only if $J$ is infinite. Hence, $I$ does not extend to a maximal independent set in $X:=I\cup B_0$. 
\end{exm}
\begin{figure}
\centering
 \includegraphics{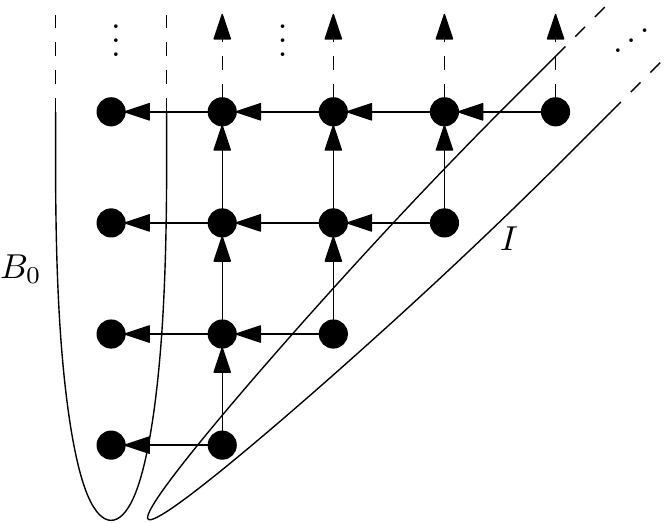}
 \caption{A locally finite dimaze which does not define a matroid}
\label{fig:halfGrid} 
\end{figure}

We now define families of digraphs and dimazes central to our investigations. 
An \emph{alternating ray} ($AR$) is a digraph having infinitely many vertices of in-degree 2 such that the underlying graph is a \emph{ray}, i.e.~an 1-way infinite path. An \emph{alternating comb} ($AC$) is a dimaze constructed by linking all the vertices of in-degree 2 and the first vertex of an alternating ray onto a set of exits by (possibly trivial) disjoint (directed) paths which meet the ray exactly at their initial vertices (see for example \autoref{fig:ARIC}a). 
We say that a dimaze $(D, B_0)$ \emph{contains} an alternating comb, if $D$ contains the digraph of an AC as a subdigraph and $B_0$ contains the exits of the AC.  

Given a path $P$ and a vertex $w$ on $P$, $Pw$ denotes the segment from the initial vertex up to $w$ and $P\!\!\stackrel{\circ}{w}$ the same segment with $w$ excluded. We use $PwQ$ for the concatenation of $Pw$ and $wQ$ where $Q$ is a path containing $w$; and other similar notations. We also identify $P$ with its vertex set.

\section{Dimazes and matroid axioms}\label{sec:strictGammoids}
The aim of this section is to give a sufficient condition for a dimaze $(D, B_0)$ to define a matroid. As (I1) and (I2) hold for $M_L(D, B_0)$, we need only consider (I3) and (IM). 

\subsection{Linkability system and proof of (I3)}
We prove that (I3) holds in any $M_L(D, B_0)$ using a result due to Gr\"unwald \cite{Gru38}.\footnote{A presentation of the lemma can be found in \cite[Lemmas 3.3.2 and 3.3.3]{Die10}. With alternating walks defined as here, the proofs given there work almost verbatim for general dimazes.}
Let $(D,B_0)$ be a dimaze and $X$ a fixed subset of $V$. Fix a linkage $\Pcal$ from a subset of $X$ to $B_0$. An \emph{alternating walk} $W=v_0e_0v_1e_1 \ldots e_{n-1}v_n$ (\emph{with respect to} $\Pcal$) is a sequence alternating between vertices and edges of $D$ such that for all $i,j< n$ with $i\neq j$, every edge $e_i$ is incident with the vertices $v_i$ and $v_{i+1}$, $e_i\neq e_j$, $v_0\in X\-V(\Pcal)$, and  the following properties hold:
\begin{enumerate}
\item $e_i=(v_{i+1}, v_i)$ if and only if $e_i\in E(\Pcal)$;
\item if $v_i=v_j$ then $v_i\in V(\Pcal)$;
\item if $v_i\in V(\Pcal)$, then $\{e_{i-1},e_i\} \cap E(\Pcal)\neq \emptyset$ (with $e_{-1}:=e_0$).
\end{enumerate}

\begin{lem} \label{thm:AW2Linkage}
Let $(D,B_0)$ be a dimaze, $X$ a subset of $V$ and $\Pcal$ a linkage from a subset of $X$ to $B_0$. If an alternating walk ends in $B_0\-V(\Pcal)$, then $D$ contains a linkage $\Qcal$ such that $\Ini(\Pcal)\subsetneq \Ini(\Qcal)\subseteq X$ and $\Ter(\Pcal)\subsetneq \Ter(\Qcal)\subseteq B_0$. If no such walk exists, then there is an $X$--$B_0$ separator on $\Pcal$.
\end{lem}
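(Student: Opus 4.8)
The plan is to adapt the classical augmenting-walk proof of Menger's theorem (as in \cite[Lemmas 3.3.2, 3.3.3]{Die10}) to the directed setting, relying on the fact that an alternating walk $W=v_0e_0v_1\cdots e_{n-1}v_n$ is a \emph{finite} object even when $\Pcal$ is infinite. Consequently every construction below is local, disturbing only the finitely many paths of $\Pcal$ that $W$ meets, and the remainder of $\Pcal$ survives unchanged.

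For the first assertion, suppose we are given such a $W$ with $v_0\in X\setminus V(\Pcal)$ and $v_n\in B_0\setminus V(\Pcal)$. I would form a new edge set $F:=E(\Pcal)\mathbin{\triangle}\{e_0,\dots,e_{n-1}\}$, the symmetric difference of the linkage with the edges of $W$; intuitively this ``switches'' $\Pcal$ along $W$, deleting the backward edges of $W$ (those in $E(\Pcal)$, which by condition~(1) are traversed against their orientation) and inserting the forward ones. The key verification is that $F$ is the edge set of a linkage $\Qcal$: condition~(1) guarantees orientations are respected across the switch, condition~(2) confines every self-intersection of $W$ to $V(\Pcal)$, and condition~(3) ensures that at each such repeated vertex exactly one of the two incident walk-edges lies in $E(\Pcal)$, so that after switching every vertex retains in- and out-degree at most one and the components of $F$ are again disjoint directed paths ending in $B_0$. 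Since $v_n\in B_0$ has out-degree $0$, its incident walk-edge is an addition, and likewise at $v_0$; tracking these two previously unmatched endpoints then yields $\Ini(\Qcal)=\Ini(\Pcal)+v_0$ and $\Ter(\Qcal)=\Ter(\Pcal)+v_n$, which gives the claimed strict containments $\Ini(\Pcal)\subsetneq\Ini(\Qcal)\subseteq X$ and $\Ter(\Pcal)\subsetneq\Ter(\Qcal)\subseteq B_0$.

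For the second assertion, assume no alternating walk ends in $B_0\setminus V(\Pcal)$. Let $R$ be the set of all vertices at which some alternating walk (w.r.t.\ $\Pcal$) ends; the trivial walks give $X\setminus V(\Pcal)\subseteq R$, and $R\cap(B_0\setminus V(\Pcal))=\emptyset$ by assumption. From each path $P\in\Pcal$ I would select a single vertex $s_P$: the last vertex of $P$ (in the direction from its initial vertex towards $B_0$) lying in $R$ if $P$ meets $R$, and the initial vertex of $P$ otherwise. Then $S:=\{s_P:P\in\Pcal\}$ meets every path of $\Pcal$ in exactly one vertex, so $S$ lies on $\Pcal$. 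To see that $S$ separates $X$ from $B_0$, I would argue by contradiction: given an $X$--$B_0$ path $Q$ disjoint from $S$, traverse $Q$ from its start and inductively append its edges to alternating walks, using conditions~(1)--(3) to check that each newly reached vertex of $Q$ again lies in $R$. Because $Q$ avoids every $s_P$, this propagation is never stopped at a frontier vertex and reaches the terminal $b\in B_0$ of $Q$; but $b\in R$ then contradicts either the standing assumption (if $b\notin V(\Pcal)$) or the choice of $s_P$ as the last $R$-vertex on its path (if $b$ lies on some $P\in\Pcal$, where it would necessarily lie strictly beyond $s_P$).

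I expect the main obstacle to be the first part: verifying rigorously that the symmetric-difference switch produces genuinely vertex-disjoint \emph{directed} paths. The self-intersections permitted by condition~(2) mean that $W$ need not be a path, so the bookkeeping at repeated vertices---showing that conditions~(1)--(3) force $F$ to decompose into disjoint dipaths with the correct orientations and with endpoints in $B_0$---is where the directed analogue departs from, and must be argued more carefully than, the undirected template.
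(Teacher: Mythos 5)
Your overall strategy is the right one---the paper itself offers no proof of this lemma but defers to Diestel's Lemmas 3.3.2 and 3.3.3, whose arguments (symmetric difference along the walk for the augmentation, a ``last reachable vertex'' on each path for the separator) are exactly what you reconstruct. The first half is essentially sound, modulo two slips you partly anticipate: condition (3) gives \emph{at least} one, not exactly one, walk-edge in $E(\Pcal)$ at a repeated vertex, and when both incident walk-edges are linkage edges the vertex loses both and becomes isolated; so the components of $F$ are \emph{not} all directed paths ending in $B_0$. The clean way out is to follow out-edges from each vertex of $\Ini(\Pcal)+v_0$ (these have in-degree $0$ in $F$, so the resulting walks are paths; a local degree count shows the vertices of out-degree $0$ and positive in-degree are exactly the nontrivial members of $\Ter(\Pcal)+v_n$) and simply discard the remaining components.

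The second half contains a genuine error: your separator ignores the vertices of $X$ that lie on paths of $\Pcal$ other than as initial vertices. Take $D$ to be the path $a\to x\to b$ with $B_0=\{b\}$, $X=\{a,x\}$ and $\Pcal$ consisting of the single path $axb$. Since $X\-V(\Pcal)=\emptyset$ there are no alternating walks at all, so your rule gives $s_P=a$ and $S=\{a\}$; but $x\to b$ is an $X$--$B_0$ path avoiding $a$. The correct choice is to let $s_P$ be the last vertex of $P$ lying in $X$ \emph{or} on some alternating walk (not merely a vertex at which a walk \emph{ends}). The same oversight surfaces twice in your verification: (i) the propagation along $Q$ only gets started if $Q$ begins in $X\-V(\Pcal)$, and you never treat a path starting at a vertex of $X\cap V(\Pcal)$; and (ii) to leave a vertex $v\in V(\Pcal)$ along a non-linkage edge of $Q$, condition (3) forces the walk to \emph{arrive} at $v$ along a linkage edge, which membership of $v$ in your set $R$ does not guarantee---the standard fix is to reach some vertex of $P$ at or beyond $v$ and then run backwards along $P$ to $v$, arriving via a linkage edge, before branching off along $Q$. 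With $s_P$ redefined as above and the argument organised around the \emph{last} vertex of $Q$ lying in $X$ or on an alternating walk, the proof goes through as in Diestel.
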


\begin{lem}\label{thm:I3_ForLinkabilitySystems}
Let $(D,B_0)$ be a dimaze. Then $M_L(D,B_0)$ satisfies (I3).
\end{lem}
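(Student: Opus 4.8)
The plan is to reduce everything to a single application of \autoref{thm:AW2Linkage}. Since (I1) and (I2) hold for any linkability system, I only need to verify (I3): fixing $I\in\Ical\setminus\Ical^{\max}$ and $I'\in\Ical^{\max}$, I must produce some $x\in I'\setminus I$ with $I+x$ linkable. The first thing I would record is that a maximal independent set is always linkable \emph{onto} $B_0$: by \eqref{eqn:linkableOnto} the linkable set $I'$ extends to some $I''\supseteq I'$ linkable onto $B_0$, and as $I''$ is then independent while $I'$ is maximal, $I''=I'$. So I may fix a linkage $\Pcal'$ from $I'$ with $\Ter(\Pcal')=B_0$.

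Next I would exploit that $I$ is \emph{not} maximal in order to choose a good linkage from $I$. Pick $z\notin I$ with $I+z$ linkable; if $z\in I'$ then $x:=z$ already works, so assume $z\notin I\cup I'$. Take a linkage $\Ccal$ from $I+z$ and set $\Pcal:=\Ccal\restric I$, the restriction to the paths starting in $I$. Then $\Pcal$ is a linkage from $I$ that \emph{misses} the exit $b^{\ast}$, the terminal vertex of the path of $\Ccal$ starting at $z$; since exits have out-degree $0$, this gives $b^{\ast}\in B_0\setminus\Ter(\Pcal)=B_0\setminus V(\Pcal)$, and the $z$--$b^{\ast}$ path of $\Ccal$ is disjoint from $\Pcal$. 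With a free exit $b^{\ast}$ in hand I would set $X:=I\cup I'$ and apply \autoref{thm:AW2Linkage} to $\Pcal$ and $X$. In the first alternative the lemma returns a linkage $\Qcal$ with $I=\Ini(\Pcal)\subsetneq\Ini(\Qcal)\subseteq X$; as $\Ini(\Qcal)\setminus I\subseteq I'\setminus I$ is nonempty, any $x$ in it satisfies $I+x\subseteq\Ini(\Qcal)$, which is linkable, so $I+x$ is linkable by (I2) and we are done.

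The crux is to rule out the lemma's second alternative, namely an $X$--$B_0$ separator $S$ lying on $\Pcal$, and this is where the choice of $\Pcal$ matters. A carelessly chosen linkage from $I$ — for instance one that already reaches all of $B_0$ — can genuinely land in the separator case even when $I$ \emph{is} augmentable, because every alternating walk is then blocked before it can reach a used exit. The point of deliberately leaving the free exit $b^{\ast}$ is that $I'$ still reaches $b^{\ast}$ along $\Pcal'$; I would use that $\Pcal'$-path, alternated against $\Pcal$, to manufacture an alternating walk from $I'\setminus V(\Pcal)$ to $b^{\ast}\in B_0\setminus V(\Pcal)$, which contradicts the presence of the separator and forces the first alternative.

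The main obstacle I expect is that this contradiction cannot be obtained by counting. In the finite case it is immediate: a separator $S$ on $\Pcal$ has $|S|=|I|=|\Ter(\Pcal)|\le|B_0|-1$ because $\Pcal$ misses $b^{\ast}$, whereas $\Pcal'$ links $I'$ onto $B_0$ with every path meeting $S$, forcing $|B_0|=|I'|\le|S|\le|B_0|-1$, which is absurd. In an infinite dimaze all of these cardinalities may coincide, so the separator must be excluded \emph{structurally} — by tracing the $\Pcal'$-path to the free exit and alternating it against $\Pcal$ to build an explicit augmenting walk, in the spirit of \autoref{thm:AW2Linkage} and the Aharoni--Berger--Menger theorem — rather than by comparing sizes. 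Guaranteeing that such a walk exists, and that it can be taken finite, in an arbitrary infinite dimaze is the step I anticipate will demand the most care.
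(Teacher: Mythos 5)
Your setup agrees with the paper's: both arguments fix a linkage $\Pcal$ from $I$ that misses some exit (your $b^{\ast}$, the paper's $v_0$, obtained exactly as you obtain it, by restricting a linkage of $I+z$ to the paths starting in $I$), apply \autoref{thm:AW2Linkage} with $X=I\cup I'$, and in the first alternative read off the desired $x\in I'\setminus I$. The gap is in the separator case, which is the heart of the lemma, and your plan for it does not work as stated. You propose to refute the separator by ``manufacturing an alternating walk from $I'\setminus V(\Pcal)$ to $b^{\ast}$'' out of the $\Pcal'$-path reaching $b^{\ast}$. But the second alternative of \autoref{thm:AW2Linkage} is in force precisely when no alternating walk starting in $X\setminus V(\Pcal)$ ends in $B_0\setminus V(\Pcal)$, so you would be constructing an object whose nonexistence is exactly the case hypothesis, and you give no construction: the $\Pcal'$-path ending at $b^{\ast}$ may well start inside $V(\Pcal)$ (even inside $I$), in which case it cannot begin an alternating walk at all, and there is no evident way to splice it against $\Pcal$ so that conditions (1)--(3) are satisfied. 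You correctly diagnose that cardinality comparisons are useless in the infinite setting, but you then leave the essential step as an expectation rather than an argument.

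The paper resolves the separator case without ever building a walk: it uses the separator to contradict the maximality of $I'$ directly. Given a $(I\cup I')$--$B_0$ separator $S$ on $\Pcal$ and a linkage $\Qcal$ from $I'$, for each $v\in I'$ let $s_v$ be the first vertex of $S$ on the path $Q_v\in\Qcal$ (it exists because $S$ separates $I'$ from $B_0$) and let $P_v$ be the path of $\Pcal$ through $s_v$. The concatenations $Q_vs_vP_v$ are pairwise disjoint --- any meeting of $Q_vs_v$ with $s_{v'}P_{v'}$ off $S$ would concatenate to a path from $I'$ to $B_0$ avoiding $S$ --- so they form a linkage of $I'$ terminating in $\Ter(\Pcal)\subseteq B_0-b^{\ast}$. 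Hence $I'+b^{\ast}$ is linkable (and $b^{\ast}\notin I'$, since otherwise the trivial path at $b^{\ast}$ would force $b^{\ast}\in S\subseteq V(\Pcal)$), contradicting the maximality of $I'$. This rerouting argument is the piece your proposal is missing; your preliminary observation via \eqref{eqn:linkableOnto} that the maximal set $I'$ is linkable onto $B_0$ is correct but is not the ingredient that closes the case.
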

\begin{proof}
Let $I, B\in M_L(D, B_0)$ such that $B$ is maximal but $I$ is not. Then we have a linkage $\Qcal$ from $B$ and another $\Pcal$ from $I$. We may assume $\Pcal$ misses some $v_0\in B_0$. 

If there is an alternating walk with respect to $\Pcal$ from $(B\cup I)\setminus V(\Pcal)$ to $B_0\-V(\Pcal)$, then by \autoref{thm:AW2Linkage}, we can extend $I$ in $B\-I$.

On the other hand, if no such walk exists, we draw a contradiction to the maximality of $B$. In this case, by \autoref{thm:AW2Linkage}, there is an $(B\cup I)$--$B_0$ separator $S$ on $\Pcal$. For every $v\in B$, let $Q_v$ be the path in $\Qcal$ starting from $v$. Let $s_v$ be the first vertex of $S$ that $Q_v$ meets and $P_v$ the path in $\Pcal$ containing $s_v$. Let us prove that the set $\Qcal'=\{Q_vs_vP_v:v\in B\}$ is a linkage.

Suppose $v$ and $v'$ are distinct vertices in $B$ such that  $Q_vs_vP_v$ and $Q_{v'}s_{v'}P_{v'}$ meet each other.
As $\Pcal$ and $\Qcal$ are linkages, without loss of generality, we may assume $Q_vs_v$ meets $s_{v'} P_{v'}$ at some $s\notin S$. 
Then $Q_vsP_{v'}$ is a path from $B$ to $B_0$ avoiding the separator. This contradiction shows that $\Qcal'$ is indeed a linkage from $B$ to $B_0$. As $\Qcal'$ does  not cover $v_0$, $B+v_0$ is independent which contradicts to the maximality of $B$.
\end{proof}

\subsection{Linkage theorem and (IM)}
\label{sec:linkageThm}
Since (I3) holds for any $M_L(D, B_0)$, it remains to investigate (IM). 
Recall that for any finite digraph $D$ and $B_0\subseteq V$, the following holds for $(D, B_0)$: 
\begin{align}
\label{eqn:dagger} 
\mbox{A set is maximally independent if and only if it is linkable onto the exits.} \tag{$\dagger$}
\end{align}
When $D$ is infinite, \eqref{eqn:dagger} need not hold;~for instance, the half grid in \autoref{exm:halfGrid}, which does not even define a matroid. It turns out that there are always non-maximal independent sets linkable onto the exits in a dimaze that does not define a matroid. To prove this, we will use the linkage theorem \cite{Pym69} (see also \cite{DT06}) and the infinite Menger's theorem \cite{AB09}. 

Now the natural question is:~in which dimazes is every set, that is linkable onto the exits, a maximal independent set? Consider the AC given in  \autoref{fig:ARIC}a. Using the notation there, the set $X=\{x_i:i\geq 1\}$ can be linked onto $B_0$ by the linkage $\{(x_i, y_{i-1}): i\geq 1\}$ or to $B_0-x_0$ by the linkage $\{(x_i, y_i): i\geq 1\}$. Hence, $X$ is a non-maximal independent set that is linkable onto $B_0$.  More generally, if a dimaze $(D, B_0)$ contains an alternating comb $C$, then the vertices of out-degree 2 on $C$ together with $B_0-C$ is a non-maximal set linkable onto $B_0$. So an answer to the above question must exclude dimazes containing an AC. 
We will prove that dimazes without any AC are precisely the answer. 

One might think that the following proof strategy should work:~If the characterization of maximal independent sets does not hold, then there are two linkages, a blue one from a set and a red one from a proper superset, both covering the exits. To construct an AC, one starts with finding an AR. For that, 
a first attempt is to ``alternate'' between the red and blue linkages, i.e.~to repeat the following: go forward along the red linkage, change to the blue one at some common vertex, and then go backwards on the blue linkage, and change again to the red one. It is not the case that this construction always gives rise to an AR (because vertices might be visited twice). But supposing that we do get an AR, a natural way to extend it to an AC is to use the terminal segments of one fixed linkage. However, this AR can have two distinct vertices of in-degree 2 which lie on the same path of the fixed linkage. 

Appropriate choices to alternate between the linkages will be provided by the \emph{proof} of the linkage theorem of Pym \cite{Pym69}. 
So we give a sketch of the proof, rephrased for our purpose. 

\begin{LinkageThm}\phantomsection\label{thm:linkageThm}
Let $D$ be a digraph and two linkages be given: the ``red'' one, $\Pcal = \{ P_x : x \in X_\Pcal \}$, from $X_\Pcal$ onto $Y_\Pcal$ and the ``blue'' one, $\Qcal = \{ Q_y : y \in Y_\Qcal \}$, from $X_\Qcal$ onto $Y_\Qcal$. Then there is a set $X^\infty$ satisfying $X_\Pcal \subseteq X^\infty \subseteq X_\Pcal \cup X_\Qcal$ which is linkable onto a set $Y^\infty$ satisfying $Y_\Qcal \subseteq Y^\infty \subseteq Y_\Qcal \cup Y_\Pcal$.
\end{LinkageThm}
\begin{proof}[Sketch of proof]\footnote{
We find that the following story makes the proof more intuitive. Imagine that   
a directed path in a linkage corresponds to a pipeline which transports water backwards from a pumping station located at the terminal vertex of this path to its initial vertex. At the initial vertex of every red pipeline, there is a farm whose farmer is \emph{happy} if and only if his farm is supplied with water. 
The water from every blue pipeline flows into the desert at the initial vertex, even if there is a farm. 

The story starts on day $0$ when suddenly all the red pumping stations are broken. At the beginning of each day, every farmer follows the rule: 

\begin{quote}
``If you are \emph{unhappy}, then move onward along your red pipeline until you can potentially get some water.'' 
\end{quote}
So the unhappy farmers take their toolboxes and move along their pipelines. 
Every farmer stops as soon as he comes across a pipeline/pumping station which still transports water and manipulates it in such a way that all the water flows into his red pipeline and then to his farm. If a pipeline has been manipulated by more than one farmer, then the one who is the closest to the pumping station gets the water, ``stealing'' it from the others. 
When a farmer arrives at his pumping station, he repairs it, but only if he cannot steal. We assume that every farmer needs a full day for the whole attempt to get water; so he 
realizes that someone else has stolen his water only at the end of the day. The story ends when every farmer is happy. 

A proof of the linkage theorem can be derived by examining the flow of water and the movement of the farmers. 
In particular, each pumping station supplies its final farm and each farmer reaches his final position (where he stays happily) after only finitely many days. 
It turns out that the water flow at the end is a linkage covering the red initial and the blue terminal vertices, as required by the theorem. 
} 
We construct a sequence of linkages converging to a linkage with the desired properties. For each integer $i\geq 0$, we will specify a vertex on each path in $\Pcal$. For each $x\in X_\Pcal$, let $f^0_x := x$. Let $\Qcal^0 := \Qcal$.
For each $i> 0$ and each $x\in X_\Pcal$, let $f^i_x$ be the last vertex $v$ on $f^{i-1}_xP_x$ such that $(f^{i-1}_x P_x\!\stackrel{\circ}{v}) \cap V(\Qcal^{i-1}) = \emptyset$.\footnote{The farmer at $f^{i-1}_x$ moves onward to the closest vertex where there was still water at the beginning of day $i-1$ or, if no such vertex exists, to the pumping station of $P_x$.} 
For $y\in Y_\Qcal$, let $t_y^i$ be the first vertex $v\in Q_y$ such that the terminal segment $\stackrel{\circ}{v}\!Q_y$ does not contain any $f^i_x$.\footnote{If $f^i_x=t_y^i$, the farmer at $f^i_x$ is the closest to the pumping station at $y$ and steals the water from any other farmer on $Q_y$.} Let

\vspace{-0.2in}
\begin{align*}
\Acal^{i} &:= \{ Q_y\in \Qcal : t_y^i \neq f_x^i\ \forall x\in X_\Pcal\},\\
\Bcal^{i} &:= \{ P_xf^i_xQ_y: x\in X_\Pcal, y\in Y_\Qcal \mbox{ and } f^i_x =t_y^i\},\\
\Ccal^{i} &:= \{ P_x\in \Pcal: f_x^i\in Y_\Pcal \mbox{ and }f_x^i\neq t_y^i\ \forall y\in Y_\Qcal\},
\end{align*}
and $\Qcal^i := \Acal^i \cup \Bcal^i \cup \Ccal^i$. 

Inductively, one can show that $\Qcal^i$ is a linkage which covers $Y_\Qcal$.  
Since for any $x$ and $j\geq i$, $P_xf^i_x\supseteq P_xf_x^j$, it can be shown that $t_y^iQ_y\subseteq t_y^jQ_y$ for any $y$. As all the paths $P_x$ and $Q_y$ are finite, there exist integers $i_x, i_y\geq 0$ such that $f^{i_x}_x = f^k_x$, $t^{i_y}_y = t^l_y$ for all integers $k\geq i_x$ and $l\geq i_y$. Thus, we define $f^\infty_x := f^{i_x}_x, t^\infty_y := t^{i_y}_y$ and 

\vspace{-0.2in}
\begin{align*}
\Acal^{\infty} &:= \{ Q_y\in \Qcal : t_y^\infty \neq f_x^\infty\ \forall x\in X_\Pcal\},\\
\Bcal^{\infty} &:= \{ P_xf^\infty_xQ_y: x\in X_\Pcal, y\in Y_\Qcal \mbox{ and } f^\infty_x =t_y^\infty\},\\
\Ccal^{\infty} &:= \{ P_x\in \Pcal: f_x^\infty\in Y_\Pcal \mbox{ and }f_x^\infty\neq t_y^\infty\ \forall y\in Y_\Qcal\}.
\end{align*}
Then $\Qcal^\infty:=\Acal^\infty \cup \Bcal^\infty \cup \Ccal^\infty$ covers $Y_\Qcal$. Moreover, $\Qcal^\infty$ is a linkage. Indeed, as $t^\infty_y Q_y \subseteq t^i_yQ_y$ for any $i$, the intersection of $P_x f^\infty_x$ and  $t^\infty_y Q_y$ is either empty or the singleton of $f^\infty_x= t^\infty_y$. It remains to argue that $X_{\Pcal}\subseteq \Ini(\Qcal^\infty)$. Let $x\in X_{\Pcal}$. 
If $f^\infty_x=t^\infty_y$ for some $y$, then $x\in \Ini(\Bcal^\infty)$. Otherwise, there exists an integer $j$ such that $f^\infty_x = f^j_x$ and $f^\infty_x \neq t^j_y$ for any $y$. Since $f^{j+1}_x = f^j_x$, it follows that $f^j_x$ is on a path in $\Ccal^j$, so $f^\infty_x \in Y_\Pcal$. Hence, $x\in \Ini(\Ccal^\infty)$. 
\end{proof}

We can now prove the following. 
\begin{mainthm}
\label{thm:dagger2matroid}
Given a dimaze $(D, B_0)$, suppose that every independent set linkable onto the exits is maximal, then the dimaze defines a matroid. 
\end{mainthm}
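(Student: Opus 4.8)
Since (I1) and (I2) hold trivially for any linkability system and (I3) is \autoref{thm:I3_ForLinkabilitySystems}, the only axiom left to verify is (IM). The plan is to first extract the usable consequence of the hypothesis. Call a set \emph{onto} if it is linkable onto $B_0$. By \eqref{eqn:linkableOnto} every maximal independent set is onto (a maximal independent set extends to an onto set, which by maximality equals it), and by assumption every onto set is maximal; hence the maximal independent sets are \emph{exactly} the onto sets, and every independent set extends to one.

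Now fix $I\in\Ical$ with $I\subseteq X\subseteq V$; I want a maximal element of $\{I'\in\Ical:I\subseteq I'\subseteq X\}$. Applying Zorn's Lemma directly to this family is hopeless, since an increasing chain of independent sets need not have an independent union (already in the half grid of \autoref{exm:halfGrid}). Instead I would pass to the family $\Fcal:=\{B\cap X : B \text{ onto and } I\subseteq B\}$, ordered by inclusion; note $\Fcal\neq\emptyset$ and every member contains $I$. Granting the Key Lemma below, Zorn's Lemma does apply to $\Fcal$: the union $J$ of a chain in $\Fcal$ is independent, hence extends to an onto set $B^{*}\supseteq J\supseteq I$, so that $B^{*}\cap X\in\Fcal$ is an upper bound. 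Let $J=B\cap X$ be a maximal member of $\Fcal$. This $J$ is the element required by (IM): it is independent with $I\subseteq J\subseteq X$, and if some $x\in X\setminus J$ had $J+x\in\Ical$, I would extend $J+x$ to an onto set $B'\supseteq I$ and get $B'\cap X\supseteq J+x\supsetneq J$, contradicting maximality of $J$ in $\Fcal$. Hence $J$ is maximal in the family, (IM) holds, and $M_L(D,B_0)$ is a matroid.

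Everything thus reduces to the Key Lemma: \emph{the union $J$ of any chain $(B_\alpha\cap X)_\alpha$ in $\Fcal$ is independent}. To prove it I would take for each $\alpha$ a linkage $\Pcal_\alpha$ from $B_\alpha$ onto $B_0$ and discard the paths whose initial vertex lies outside $X$, leaving a linkage of $B_\alpha\cap X$ into $B_0$. Passing to a limit of these linkages along the chain, by the stabilisation argument from the proof of the Linkage Theorem (the vertices $f^i_x$ and $t^i_y$ there), produces a single linkage $\Pcal$ whose initial set $J'$ satisfies $J'\subseteq J$. The whole content of the lemma is that no initial vertex escapes in the limit, i.e.\ $J'=J$. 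This is precisely the step that fails without the hypothesis: in the half grid the analogous limit leaks to infinity. I would rule the leak out as follows: if some $v\in J\setminus J'$ were lost, then $\Pcal$ extends by \eqref{eqn:linkableOnto} to an onto set $\hat B$ omitting $v$; but $v$ lies in some $B_{\alpha_0}\cap X$ and can be re-linked using the earlier paths, so tracing this through would exhibit an onto set that is \emph{not} maximal, contradicting the hypothesis.

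The main obstacle is exactly this Key Lemma, and inside it the limit step. The Linkage Theorem is stated for two linkages, so it has to be upgraded to a limit along a (possibly long) chain, with careful bookkeeping of the ``current position'' on each path; and one must then show — this is where the hypothesis, rather than any purely graph-theoretic fact, is indispensable — that the limiting linkage still covers the entire union of the initial sets. I expect that convergence argument, together with the translation of a lost vertex into a non-maximal onto set, to be the technical heart of the proof; the two reductions above are then routine.
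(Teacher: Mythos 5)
Your reductions are sound: (I1), (I2), (I3) are handled correctly, the observation that the hypothesis makes ``maximal'' and ``linkable onto $B_0$'' coincide is right, and the Zorn argument correctly reduces (IM) to your Key Lemma (the final step, extending $J+x$ to an onto set to contradict maximality in $\Fcal$, is fine). But the Key Lemma is exactly where the entire difficulty of the theorem lives, and your treatment of it has a genuine gap rather than a routine omission. First, the stabilisation argument in the proof of the linkage theorem is tailored to \emph{two} linkages: the positions $f^i_x$ move monotonically along a single fixed red path and stabilise because each path is finite. For a chain of linkages $(\Pcal_\alpha)$ --- possibly of uncountable cofinality --- there is no analogous bookkeeping: iterating the linkage theorem along the chain changes the reference linkage at every step, the ``current positions'' need not stabilise, and at limit stages the limiting object need not even be a linkage. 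This is not a technicality; it is precisely the half-grid phenomenon you yourself cite. Second, your proposed repair of the leak is circular: to contradict the hypothesis you must exhibit an onto set that is not maximal, i.e.\ show that $\hat B + v$ is \emph{independent}; saying that $v$ ``can be re-linked using the earlier paths'' does not produce a linkage of $\hat B + v$ --- merging a linkage of $\hat B$ with a path from $v$ is itself an instance of the problem you are trying to solve. So the proposal, as it stands, proves the theorem modulo an unproved lemma whose truth is not obvious and whose sketched proof does not work.

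For comparison, the paper avoids chains and Zorn altogether. Given $I\subseteq X$, it applies the Aharoni--Berger--Menger theorem to $X$ and $B_0$ to obtain a linkage $\Qcal$ from some $B\subseteq X$ and an $X$--$B_0$ separator $S$ on $\Qcal$; linkability of subsets of $X$ then reduces to linkability onto $S$ in the subdigraph $H$ cut off by $S$. A \emph{single} application of the linkage theorem to the linkage of $I$ and $\Qcal$ yields $X^\infty$ with $I\subseteq X^\infty\subseteq I\cup B$ linkable onto $S$. The hypothesis is then used as follows: any independent $Y\supseteq X^\infty$ in $(H,S)$ may be assumed linkable onto $S$, and concatenating with the tails of $\Qcal$ beyond $S$ and trivial paths on $B_0\setminus V(\Qcal)$ shows both $Y\cup(B_0\setminus V(\Qcal))$ and $X^\infty\cup(B_0\setminus V(\Qcal))$ are onto, hence maximal, forcing $Y=X^\infty$. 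The separator is what replaces your limit argument: it localises the problem so that one application of the (two-linkage) theorem suffices. If you want to salvage your route, you would need to either prove the Key Lemma by essentially this separator argument, or find an independent proof of it --- in which case it would be a result worth stating on its own.
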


\begin{proof}
Since (I1) and (I2) are obviously true for $M_L(D, B_0)$, and that (I3) holds by \autoref{thm:I3_ForLinkabilitySystems}, to prove the theorem, it remains to check that (IM) holds. 

Let $I$ be independent and a set $X\subseteq V$ such that $I\subseteq X$ be given. Suppose there is a ``red'' linkage from $I$ to $B_0$. Apply the Aharoni-Berger-Menger's theorem on $X$ and $B_0$ to get a ``blue'' linkage $\Qcal$ from $B \subseteq X$ to $B_0$ and an $X$--$B_0$ separator $S$ on the blue linkage. 
Let $H$ be the subgraph induced by those vertices separated from $B_0$ by $S$ with the edges going out of $S$ deleted. 
Since every linkage from $H$ to $B_0$ goes through $S$,  
a subset of $V(H)$ is linkable in $(D, B_0)$ if and only if it is linkable in $(H, S)$. Use the \hyperref[thm:linkageThm]{linkage theorem} to find a linkage $\Qcal^\infty$ from $X^\infty$ with $I \subseteq X^\infty \subseteq I \cup B \subseteq X$ onto~$S$.

Let $Y\supseteq X^\infty$ be any independent set in $M_L(H,S)$. By applying the linkage theorem on a linkage from $Y$ to $S$ and $\Qcal^\infty$ in $(H,S)$, we may assume that $Y$ is linkable onto $S$ by a linkage $\Qcal'$.  
Concatenating $\Qcal'$ with segments of paths in $\Qcal$ starting from $S$ and adding trivial paths from  $B_0\- V(\Qcal)$ gives us a linkage from $Y\cup (B_0\- V(\Qcal))$ onto $B_0$. By the hypothesis, $Y\cup (B_0\- V(\Qcal))$ is a maximal independent set in $M_L(D, B_0)$. 

Applying the above statement on $X^\infty$ shows that $X^\infty\cup (B_0\- V(\Qcal))$ is also maximal in $M_L(D, B_0)$. It follows that $Y=X^\infty$. Hence, $X^\infty$ is maximal in $M_L(H, S)$, and so also in $M_L(D, B_0) \cap 2^X$. 
This completes the proof that $M_L(D, B_0)$ is a matroid. 
\end{proof}

Next we show that containing an AC is the only reason that the criterion \eqref{eqn:dagger} fails. 

\begin{lem}\label{thm:gammoidBaseCriterion}
Let $(D, B_0)$ be a dimaze without any AC. Then a set $B \subseteq V$ is maximal in $M_L(D, B_0)$ if and only if it is linkable onto $B_0$.
\end{lem}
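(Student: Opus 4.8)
The statement asserts an equivalence. One direction is immediate: if $B$ is linkable onto $B_0$, then by the hypothesis that $(D,B_0)$ contains no AC together with the preceding analysis, $B$ should be maximal---but since we are proving exactly this criterion, the content lies in showing that \eqref{eqn:dagger} cannot fail, i.e.~that a set linkable onto $B_0$ is necessarily maximal when no AC is present. The other direction, that a maximal independent set is linkable onto $B_0$, follows from the general observation \eqref{eqn:linkableOnto}: a maximal independent set extends to one linkable onto $B_0$, and maximality forces the extension to be trivial. So the plan is to concentrate on the forward-contrapositive: if some set $B$ is linkable onto $B_0$ but \emph{not} maximal, then $(D,B_0)$ contains an AC.

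The plan is to start from the failure of maximality. If $B$ is linkable onto $B_0$ but not maximal, there is an independent set $B' \supsetneq B$, so we have a ``red'' linkage $\Pcal$ from the proper superset $B'$ and a ``blue'' linkage $\Qcal$ from $B$, both relevant to the same exit set. The first key step is to \emph{invoke the Linkage Theorem} rather than try to alternate between the two linkages by hand---this is precisely the subtlety flagged in the discussion preceding the lemma, where the naive alternation may revisit vertices or produce two in-degree-$2$ vertices on a common fixed path. The Linkage Theorem furnishes canonical quantities $f^\infty_x$ and $t^\infty_y$, together with the converged linkage $\Qcal^\infty$; these encode, for each red path, exactly how far the ``farmer'' has travelled, and they give a well-controlled way to trace out an alternating structure.

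The heart of the argument is then to \emph{extract an alternating ray} from the data $(\Pcal,\Qcal,\Qcal^\infty)$. Because $B$ is linkable onto $B_0$ but $B'$ is a proper superset also linkable, there must be some initial vertex $x^* \in B'\- B$ whose red path is not matched up in the blue picture; tracing the alternation starting from $x^*$---moving along a red segment $P_{x^*}f^\infty_{x^*}$, switching to a blue path at the shared vertex $f^\infty_{x^*}=t^\infty_{y}$, proceeding backwards, and repeating---should produce an infinite walk. The crucial point is that the last-vertex/first-vertex definitions of $f^\infty$ and $t^\infty$ guarantee the segments glue along single shared vertices (the same fact used to show $\Qcal^\infty$ is a linkage), so the walk does not self-intersect and genuinely underlies a ray whose in-degree-$2$ vertices are the switch points. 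Having produced an alternating ray, one extends it to an alternating comb by attaching, to the first vertex and to each in-degree-$2$ vertex, the terminal segments of a single fixed linkage (say the blue $\Qcal$) running onto $B_0$; since these are disjoint and meet the ray only at their initial vertices, the result is an AC contained in $(D,B_0)$, contradicting the hypothesis.

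\textbf{Main obstacle.} I expect the delicate part to be verifying that the alternating walk built from the switch points $f^\infty_x = t^\infty_y$ is genuinely a \emph{ray}---i.e.~that it neither terminates prematurely nor revisits a vertex, and that it produces \emph{infinitely many} in-degree-$2$ vertices rather than stalling. This is exactly the failure mode anticipated in the informal discussion (``vertices might be visited twice'' and ``two distinct vertices of in-degree 2 which lie on the same path''). The Linkage Theorem's convergence should be what rules this out: the monotonicity $P_xf^i_x\supseteq P_xf^j_x$ and $t^i_yQ_y\subseteq t^j_yQ_y$ forces the structure to stabilise consistently, and the fact that $B'\- B$ is nonempty should guarantee the switching chain never closes up into a covered configuration. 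Making this combinatorial bookkeeping precise---pinning down which paths are in $\Acal^\infty,\Bcal^\infty,\Ccal^\infty$ and showing the uncovered red vertex propagates the ray indefinitely---is where the real work will lie.
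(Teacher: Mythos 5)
Your overall strategy coincides with the paper's: reduce to the contrapositive, invoke the linkage theorem applied to the blue linkage from $B$ and the red linkage from $B+x_0$, observe that every path of $\Qcal^\infty$ splits into a red initial and a blue terminal segment, and trace the alternation from the uncovered vertex to build an alternating ray, completed to an AC by the terminal segments of the blue paths. The easy direction via \eqref{eqn:linkableOnto} is also handled as in the paper. So the plan is on the right track.

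However, the proposal stops exactly where the proof begins: the two facts you defer as ``combinatorial bookkeeping'' are the entire content of the lemma, and the tools you name are not sufficient to establish them. First, you must show the walk never terminates, i.e.~that each switch point $p_{k-1}=f^\infty_{x_{k-1}}$ is not an initial vertex of a blue path, so that a genuinely earlier vertex $q_k$ of $Q_k$ lying on $\Qcal^\infty$ exists. The paper proves this by showing $q_k\neq p_k$: if they were equal, the path $P_{x_k}q_kQ_k$ would lie in $\Bcal^\infty$ yet carry two distinct vertices of the form $f^\infty_x$, contradicting the structure of $\Bcal^\infty$. Second, and more seriously, you must show the red paths $P_{x_k}$ are pairwise distinct --- this is precisely the failure mode (``two in-degree-2 vertices on the same path'') that the limit quantities $f^\infty_x$, $t^\infty_y$ and the monotonicity $P_xf^i_x\supseteq P_xf^j_x$ alone do not rule out. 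The paper's argument is not an application of the linkage theorem as a black box but of its \emph{proof}: one associates to each $x_k$ the least day $i_k$ with $f^{i_k}_{x_k}=f^\infty_{x_k}$ and shows $i_{k-1}<i_k$, because up to day $i_{k-1}$ the vertex $f^i_{x_k}$ stays on $P_{x_k}q_k$ while $f^\infty_{x_k}=p_k$ lies strictly beyond $q_k$. Without this strictly increasing sequence of stages (or an equivalent device), your chain could revisit a red path and close up, and no AC would result. Finally, even granting distinctness, one still has to check that the red segment $q_kP_{x_k}p_k$ avoids the earlier blue paths $q_jQ_j$, which uses the choice of $q_j$ as a \emph{last} vertex together with \autoref{thm:keyDisjoint}; this verification is absent from your plan.
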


\begin{proof}
The forward direction follows trivially from \eqref{eqn:linkableOnto}. 

For the backward direction, let $I$ be a non-maximal subset that is linkable onto $B_0$, by a ``blue'' linkage $\Qcal$. Since $I$ is not maximal, there is $x_0\notin I$ such that $I + x_0$ is linkable to $B_0$ as well, by a ``red'' linkage $\Pcal$.
Construct an AC inductively as follows:

Use (the proof of) the \hyperref[thm:linkageThm]{linkage theorem} to get a linkage $\Qcal^\infty$ from $I+x_0$ onto $B_0$. 
Since $Y_\Pcal \subseteq Y_\Qcal$ and $X_\Qcal \subseteq X_\Pcal$, $\Acal^\infty=\Ccal^\infty=\emptyset$. So each path in $\Qcal^\infty$ consists of a red initial and a blue terminal segment. 

Start the construction with $x_0$.
For $k \geq 1$, if $x_{k-1}$ is defined, let $Q_k$ be the blue path containing $p_{k-1} := f^\infty_{x_{k-1}}$. We will prove that $p_{k-1}\notin I$ so that we can choose a last vertex $q_k$ on $Q_k\!\stackrel{\circ}{p_{k-1}}$ that is on a path in $\Qcal^\infty$.  Since the blue segments of $\Qcal^\infty$ are disjoint, $q_k$ lies on a red path $P_{x_k}$. We continue the construction with $x_k$.

\begin{clm}
For each $k\geq 1$, $p_{k-1}\notin I$ and hence, the blue segment $q_k Q_k p_{k-1}$ is non-trivial. The red segment $q_kP_{x_k}p_k$ is also non-trivial.
\end{clm}
\begin{proof}
We prove by induction that $p_{k-1}\notin I$. This will guarantee that $q_kQ_k p_{k-1}$ is non-trivial since $Q_k\cap I\in V(\Qcal^\infty)$. Clearly, $p_0\notin I$. Given $k\geq 1$, assume that $p_{k-1}\notin I$. 
We argue that $q_k\neq p_k$. 
Suppose not for a contradiction. Then the path $P_{x_k} q_k Q_k$ is in $\Bcal^\infty$. Since $q_kQ_k p_{k-1}$ is non-trivial,  $p_{k-1}$ and $p_k$ are distinct vertices of the form $f^\infty_x$ on $P_{x_k}q_k Q_k$. This contradicts that $P_{x_k}q_kQ_k$ is in $\Bcal^\infty$. 
Hence, we have $p_k\neq q_k$,\footnote{On day $i_{k-1}$, the farmer at $p_{k-1}$ prevents the water supplied by $Q_k$ from flowing to $q_k$. Hence, the farmer on $P_{x_k}$ must go further to get water, so $p_k\neq q_k$.} and so $p_k\notin I$. This also shows that the red segment $q_k P_{x_k}p_k$ is non-trivial. 
\end{proof}

\begin{clm} 
\label{thm:keyDisjoint}
For any $j< k$, $x_j \neq x_k$. Therefore, $q_k Q_k p_{k-1}$ is disjoint from $q_j Q_j p_{j-1}$, and so is $q_k P_{x_k}p_k$ from $q_j P_{x_j} p_j$. 
\end{clm}
\begin{proof}
For $k\geq 0$, let $i_k$ is the least integer such that $f_{x_k}^{i_k} = f_{x_k}^\infty$. We now show that $i_{k-1} < i_k$ for any given $k\geq 1$. By the choice of $q_k$, for any $i\leq i_{k-1}$, $f^{i}_{x_k}$ is on the segment $P_{x_k}q_k$. Since $P_{x_k} q_k$ is a red segment of $\Qcal^\infty$, $p_k$ is in the segment $q_k P_{x_k}$. 
Since $p_k\neq q_k$, it follows that $P_{x_k}f_{x_k}^{i_{k-1}}\subseteq P_{x_k}q_k \subsetneq P_{x_k} f_{x_k}^{\infty}$. This implies that $f_{x_k}^{i_{k-1}}\neq f_{x_k}^\infty$, so that by  definition of $i_k$, we have $i_k > i_{k-1}$.\footnote{Before day $i_{k-1}$, the water supplied by $Q_k$ ensures that the farmer on $P_{x_k}$ need not go beyond $q_k$. But eventually he moves past $q_k$ and arrives at $p_k$ on day $i_k$; so $i_k>i_{k-1}$.}
Hence, $x_k\neq x_j$ for any $j\neq k$. 
\end{proof}

We now show that $\bigcup_{k=1}^\infty q_kQ_k \cup q_kP_{x_k}p_k$ is an AC. Indeed, by the claims, it remains to check that $q_k P_{x_k} p_k$ does not meet any $q_j Q_j$ for any $j<k$. But if there is such an intersection, it can neither lie in $\stackrel{\circ}{q_j}\!\! Q_j\!\! \stackrel{\!\!\!\!\!\!\!\circ}{p_{j-1}}$ by the choice of $q_j$; 
nor in $p_{j-1}Q_j$ since $x_k\neq x_{j-1}$ (\autoref{thm:keyDisjoint}) and $\Qcal^\infty$ is a linkage. 
Hence, we have constructed an AC. This contradiction shows that $I$ is maximal.
\end{proof}

We have all the ingredients to prove the main result. 

\begin{mainthm}\label{thm:noAC2Gammoid}
Given a dimaze, the vertex sets linkable onto the exits form the bases of a matroid if and only if the dimaze contains no alternating comb. The independent sets of this matroid are precisely the linkable sets of vertices. 
\end{mainthm}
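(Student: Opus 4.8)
The plan is to read off \autoref{thm:noAC2Gammoid} from \autoref{thm:dagger2matroid} and \autoref{thm:gammoidBaseCriterion}, which between them already contain the analytic content; I treat the two implications in turn. For the ``if'' direction, assume the dimaze has no alternating comb. By \autoref{thm:gammoidBaseCriterion} a set is maximal in $M_L(D,B_0)$ exactly when it is linkable onto $B_0$, so in particular every set linkable onto the exits is maximal---precisely the hypothesis of \autoref{thm:dagger2matroid}. That theorem then makes $M_L(D,B_0)$ a matroid; its bases are its maximal independent sets, which \autoref{thm:gammoidBaseCriterion} again identifies with the sets linkable onto $B_0$. Since the independent sets of $M_L(D,B_0)$ are the linkable sets by \autoref{def:linkabilitySystem}, this simultaneously settles the last sentence of the statement.

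For the converse I argue contrapositively. Assuming the dimaze contains an alternating comb $C$, let $X$ be the out-degree $2$ vertices on $C$ together with $B_0\setminus C$; as noted after the definition of an alternating comb, $X$ is linkable onto $B_0$ yet not maximal, so there is an $x\notin X$ with $X+x$ still linkable. Suppose for contradiction that the sets linkable onto $B_0$ are the bases $\Bcal$ of a matroid $N$. Then $X\in\Bcal$, while by \eqref{eqn:linkableOnto} the linkable set $X+x$ extends to a member of $\Bcal$ and hence is independent in $N$. This exhibits a base $X$ properly contained in the independent set $X+x$, contradicting that the bases of a matroid are precisely its maximal independent sets.

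Almost all of the difficulty is already absorbed into the proof of the \hyperref[thm:linkageThm]{linkage theorem} and into \autoref{thm:gammoidBaseCriterion}, which actually builds the comb; what is left here is assembly. The step that needs the most care is the converse: I must ensure that merely \emph{having} the linkable-onto sets as bases is incompatible with the comb, and the clean way to see this is that \eqref{eqn:linkableOnto} forces $X+x$ to be a subset of a base, hence independent, which pins down the forbidden containment $X\subsetneq X+x$. The only other point worth checking explicitly is that the advertised set $X$ really behaves as claimed---that the two competing linkages along the comb make $X$ linkable onto $B_0$ while admitting the extension $X+x$---which is routine from the structure of an alternating comb.
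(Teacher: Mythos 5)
Your proposal is correct and takes essentially the same route as the paper: the backward direction is exactly the paper's combination of \autoref{thm:dagger2matroid} and \autoref{thm:gammoidBaseCriterion}, and the forward direction uses the same witness set (the out-degree-$2$ vertices of the comb together with $B_0\setminus C$). The only cosmetic difference is that the paper exhibits the two nested sets $B_1\subsetneq B_1+v$, both linkable onto $B_0$, and points at (B2) directly, whereas you reach the same contradiction via \eqref{eqn:linkableOnto} and the fact that bases are precisely the maximal independent sets.
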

\begin{proof}
The backward direction of the first statement follows from \autoref{thm:dagger2matroid} and \autoref{thm:gammoidBaseCriterion}. To see the forward direction, suppose there is an alternating comb $C$. Let $B_1$ be the union of the vertices of out-degree 2 on $C$ with $B_0-C$. Then $B_1$ is linkable onto $B_0$, and so is $B_1+v$ for any $v\in B_0\cap C$. But $B_1$ and $B_1+v$ violate the base axiom (B2). 
The second statement follows from the first and \eqref{eqn:linkableOnto}. 
\end{proof}

\begin{cor} Any dimaze which does not define a matroid contains an AC. 
\end{cor}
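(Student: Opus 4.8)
The plan is to prove the contrapositive: I would show that if a dimaze $(D, B_0)$ contains no alternating comb, then it defines a matroid. Observe that the corollary is really just a restatement of one direction of \autoref{thm:noAC2Gammoid}, so almost all of the genuine work has already been done; the remaining task is essentially to unwind the definition of ``defines a matroid'' and take the contrapositive.

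First I would assume that $(D, B_0)$ contains no AC. By the backward direction of the first statement of \autoref{thm:noAC2Gammoid}, the vertex sets linkable onto $B_0$ form the bases of a matroid $M$, and by the second statement of that theorem the independent sets of $M$ are precisely the linkable sets of vertices. Since the linkable sets are by definition exactly the members of the linkability system, this identifies $M$ with $M_L(D, B_0)$. In particular $M_L(D, B_0)$ is a matroid, which by \autoref{def:linkabilitySystem} is precisely what it means for the dimaze to define a matroid. Taking the contrapositive then yields the corollary. Alternatively, I could bypass \autoref{thm:noAC2Gammoid} and assemble the two results feeding into it directly: under the no-AC hypothesis, \autoref{thm:gammoidBaseCriterion} shows that a set is maximal in $M_L(D, B_0)$ if and only if it is linkable onto $B_0$; in particular every set linkable onto the exits is maximal, which is exactly the hypothesis of \autoref{thm:dagger2matroid}, and that theorem concludes that the dimaze defines a matroid. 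Either route makes the statement immediate.

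Since there is no substantive mathematical obstacle, the only point requiring care is the logical bookkeeping. I would make sure to read ``defines a matroid'' as ``$M_L(D, B_0)$ is a matroid'', and to invoke the \emph{second} statement of \autoref{thm:noAC2Gammoid} so that the matroid $M$ is identified with the full linkability system rather than with some proper subsystem of it; without that step one would only know that \emph{some} matroid has the sets linkable onto $B_0$ as bases, which is weaker than what the corollary asserts.
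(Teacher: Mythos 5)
Your proof is correct and follows essentially the same route as the paper, which derives this corollary immediately from the backward direction of \autoref{thm:noAC2Gammoid} (itself obtained by combining \autoref{thm:gammoidBaseCriterion} with \autoref{thm:dagger2matroid}) by taking the contrapositive. Your attention to identifying the matroid with the full linkability system via the second statement of the theorem is exactly the right bookkeeping.
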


\subsection{Nearly finitary linkability system}
\label{sec:NFLinkability}
Although forbidding AC ensures that we get a strict gammoid, not every strict gammoid arises this way. It turns out that when a dimaze gives rise to a nearly finitary (\cite{ACF}) linkability system, the dimaze defines a matroid regardless of whether it contains an AC or not. We will show this using the proof of the linkage theorem. 

\begin{lem}\label{thm:I4_ForLinkabilitySystems}
Let $(D, B_0)$ be a dimaze. Then $M_L(D, B_0)$ satisfies the following:
\begin{enumerate}
\item[$(\ast)$] For all independent sets $I$ and $J$ with $J\- I\neq \emptyset$, for every $v\in I\- J$ there exists $u \in J \setminus I$ such that $J + v - u$ is independent.
\end{enumerate}
\end{lem}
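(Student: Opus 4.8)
The plan is to fix a ``red'' linkage $\Pcal=\{P_x:x\in I\}$ from $I$ and a ``blue'' linkage $\Qcal=\{Q_w:w\in J\}$ from $J$, and to produce the required exchange by a single application of the linkage theorem, reading the outcome off the construction in its proof. First I would dispose of the easy case: if $J+v$ is itself linkable then, since $J\setminus I\neq\emptyset$, any $u\in J\setminus I$ works, because $J+v-u\subseteq J+v$ is linkable by (I2). So I assume henceforth that $J+v$ is not linkable.

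Next I would apply the linkage theorem with red linkage $\Pcal'=\{P_x:x\in\{v\}\cup(I\cap J)\}$ and blue linkage $\Qcal$. This yields a linkage $\Qcal^\infty$ from a set $X^\infty$ with $\{v\}\cup(I\cap J)\subseteq X^\infty\subseteq\{v\}\cup(I\cap J)\cup J=J+v$. Since every red initial is retained, $I\cap J\subseteq X^\infty$, and hence the dropped set $U:=J\setminus X^\infty$ satisfies $U\subseteq J\setminus I$. Moreover $\Qcal^\infty$ witnesses that $X^\infty$ is linkable; as $J+v$ is not, we get $X^\infty\neq J+v$ and therefore $U\neq\emptyset$. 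Thus it suffices to prove $|U|\le 1$: then $U=\{u\}$ with $u\in J\setminus I$ and $J+v-u=X^\infty$ is linkable, as required.

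Establishing $|U|\le 1$ is the crux, and the step I expect to be the main obstacle; I would argue it directly from the vertices $f^i_x,t^i_y$ and the sets $\Acal^i,\Bcal^i,\Ccal^i$ of the construction. The key observation is that among the chosen red initials only $v$ does not already lie at the start of its own blue path: each $x\in I\cap J$ is simultaneously a red and a blue initial, so its farmer finds water at $x$ on day $0$ and, by the definition of $f^i_x$, remains at $x$ unless some other farmer captures $Q_x$ at a vertex nearer its terminal. Consequently the disturbance created by inserting $v$ propagates as a single non-branching chain $v=x_0,x_1,x_2,\dots$, where $x_{k+1}$ is the initial of the blue path captured by the farmer of $x_k$ in $\Qcal^\infty$ (each farmer settles at one vertex $f^\infty_{x_k}$, hence lies on at most one blue path and displaces at most one further farmer). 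A blue initial is dropped only if it is the terminal vertex of this chain, and the chain continues exactly as long as its current vertex lies in $I\cap J$ and so carries a red path along which to move; hence at most one element, necessarily in $J\setminus I$, is ever dropped.

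As a consistency check on the borderline case: were the chain infinite it would stay within $I\cap J$, every chain vertex would be a retained red initial, no blue initial would be dropped, and $\Qcal^\infty$ would link all of $J+v$---contradicting that $J+v$ is not linkable. This is precisely the scenario already excluded by $U\neq\emptyset$, so the chain is finite and $|U|=1$. I anticipate that the genuinely delicate part is making the ``single, non-branching chain'' bookkeeping rigorous inside the $f^\infty_x$--$t^\infty_y$ formalism: namely, verifying that a farmer resting on its own blue path is moved only through this one propagating chain, and that the farmer of $v$ (which may already lie in the interior of some blue path) triggers the chain without branching.
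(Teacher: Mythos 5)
Your proposal follows essentially the same route as the paper: after reducing to the red linkage from $(I\cap J)+v$ and the blue linkage from $J$, the paper likewise applies (the proof of) the linkage theorem and shows that the disturbance caused by inserting $v$ propagates as a single non-branching chain of displaced farmers, so that at most one element of $J\setminus I$ is missing from $\ini(\Qcal^\infty)$. The ``delicate bookkeeping'' you flag is exactly what the paper's proof supplies, namely an induction on the day $i$ with the invariants that $(J+v)\setminus\ini(\Qcal^i)$ consists of a single vertex $x^i$ and that every blue path leading to a farm carries exactly one vertex $f^i_x$.
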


\begin{proof}
We may assume that $I \setminus J=\{v\}$. Let $\Qcal=(Q_y)_{y\in Y_\Qcal}$ be a ``blue'' linkage from $J$ onto some $Y_\Qcal \subseteq B_0$ and $\Pcal$ a ``red'' one from $I$. 
The linkage theorem yields a linkage 
$\Qcal^\infty$, which we will show to witness the independence of a desired set. 
We use the notations introduced in its proof. 
For each $y\in Y_\Qcal$, let $t^0_y$ be the initial vertex of $Q_y$. 

For $i>0$ it is not hard to derive the following facts from the definitions of $\Qcal^i, f^i_x$ and $t^i_y$: 
\begin{eqnarray}
\label{eqn:happyDontMove}
x \in I\cap \ini(\Qcal^{i-1}) &\Longrightarrow & f^i_x = f^{i-1}_x;\\
\label{eqn:CoveredByAi}
t^0_y\in \ini(\Acal^{i}) &\iff & \forall x\in I, f^{i}_x\notin Q_y;\\
\label{eqn:notCovered}
x\in I\- \ini(\Qcal^i) &\iff & \exists y\in Y_\Qcal, x'\in I \mbox{ s.t. } f^i_x \in Q_y\stackrel{\circ}{f^i_{x'} }.
\end{eqnarray}

\begin{clmnn}
For $i\geq 0$, either $\Qcal^i = \Qcal^\infty$ or there is some $x^i$ such that:\footnote{There is a unique unhappy farmer and the others are distributed such that there is exactly one happy farmer on each blue pipeline leading to a farm.} 
\begin{itemize}
\item[$U_i$:] 
$(J + v) \- \ini(\Qcal^i) = \{ x^i \}$ and $I- x^i \subseteq \ini( \Bcal^i);$ 
\item[$D_i$:]
$\forall y\in Y_\Qcal$, if $t^0_y \in I$ then $\exists! x \in I - x^i$ s.t.  $f^i_x \in Q_y$; no such $x$ otherwise.  
\end{itemize}
\end{clmnn}
\begin{proof}
With $x^0:=v$, the claim clearly holds for $i=0$.  
Given $i>0$, to prove the claim, we may assume that 
$\Qcal^{i-1} \neq \Qcal^\infty$ and $U_{i-1}$ and $D_{i-1}$ hold. 

By definition of $f^i_{x^{i-1}}$, either 
\begin{align*}
 f^i_{x^{i-1}} \in t^{i-1}_{y^i} Q_{y^i} \text{ for some unique } y^i \in Y_\Qcal \text{ or } f^i_{x^{i-1}} \in Y_\Pcal\- Y_\Qcal.
\end{align*}
Note that by \eqref{eqn:happyDontMove} only $x^{i-1}$ can be a vertex such that $f^i_{x^{i-1}} \neq f^{i-1}_{x^{i-1}}$. Hence, $t^i_y = t^{i-1}_y$ for all $y \in Y_\Qcal$ except possibly $y^i$ which satisfies $t^i_{y^i} = f^i_{x^{i-1}}$. So by \eqref{eqn:notCovered}, we have $x^{i-1} \in \ini(\Qcal^i)$.\footnote{The unhappy farmer becomes happy, either by repairing or because he is the only one who moves to steal water.}

\medskip 
Case (i): Suppose that there exists $x\in I - x^{i-1}$ such that $f^i_{x^{i-1}}$ and $f^i_{x}$ are on the same path $Q_{y^i}$. 
By $D_{i-1}$, $t^0_{y^i} \in I$ and $x$ is unique. Then $D_{i}$ holds for $x^i:=x$. In particular, by \eqref{eqn:CoveredByAi}, $J\- I \subseteq \ini(\Acal^i)$.\footnote{The whole scene only changes at $Q_{y^i}$, so there is still exactly one unhappy farmer and the pipelines not leading to a farm are untouched.}  

We now prove $U_i$. As $x^i\in \ini(\Bcal^{i-1})$, $t^{i-1}_{y^i} = f^{i-1}_{x^i}$, so $f^i_{x^{i-1}} \in \stackrel{\circ}{f^i_{x^i}}\!\! Q_{y^i}$, which implies that $x^i \notin \ini(\Qcal^i)$ 
by \eqref{eqn:notCovered}.\footnote{The farmer at $f^{i}_{x^i}$ becomes unhappy as his water has been stolen by the farmer at $f^{i}_{x^{i-1}}$.}
Given $x\in I-\{x^{i-1}, x^i\}$, then $x\in \ini(\Bcal^{i-1})$ by $U_{i-1}$. So there exists $y\neq y^i$, such that 
$f^i_x = f^{i-1}_x = t^{i-1}_y = t^i_y$. It follows that $x\in \ini(\Bcal^i)$, and $I- x^i \subseteq \ini(\Bcal^i)$. Therefore, $(J+v)\- \ini(\Qcal^i) = \{x^i\}$. 

Case (ii): Suppose that there does not exist any $x\in I - x^{i-1}$ such that $f^i_x$ is on the path $Q_{y^i}$ containing $f^i_{x^{i-1}}$, if such a path exists.     
In this case, $t^0_{y^i}\in J\- I$. 
By $D_{i-1}$, \eqref{eqn:happyDontMove} and \eqref{eqn:notCovered}, we have $I - x^{i-1}\subseteq \ini(\Bcal^i)$. 
Hence, $I \subseteq \ini(\Qcal^i)$, and $\Qcal^\infty= \Qcal^i$. 
\end{proof}

If for some integer $i>0$, case (ii) holds, then by \eqref{eqn:CoveredByAi}, only $u := t^0_{y^i} \in J \- I$ can fail to be in~$\ini(\Acal^\infty)$. 
Otherwise, case (i) holds for each integer $i\geq 0$, so that $J\- I$ is a subset of $\Ini(\Acal^i)$ and hence a subset of $\ini(\Qcal^\infty)$. In either situation, since $I = X_\Pcal \subseteq \ini(\Qcal^\infty)$, 
we conclude that there is some $u\in J\- I$ such that $J + v-u$ is independent. 
\end{proof}

Let $(E, \Ical)$ be a set system. Recall that it is {\em finitary} if a set is in $\Ical$ as soon as all its finite subsets are. The {\em finitarisation} $(E,\Ical)^{\mbox{\small fin}}$ of $(E, \Ical)$ consists of the sets which have all their finite subsets in $\Ical$. $(E,\Ical)$ is called {\em nearly finitary} if for any maximal element $B\in(E, \Ical)^{\mbox{\small fin}}$ there is an $I\in\Ical$ such that $|B\setminus I|<\infty$.

\begin{thm}\label{thm:NearlyFinitaryGammoid}
Let $(D,B_0)$ be a dimaze. If $M_L(D,B_0)$ is nearly finitary, then it is a matroid. 
\end{thm}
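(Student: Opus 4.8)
The plan is to verify the independence axioms (I1)--(I3) and (IM) for $M_L(D,B_0)$, given that it is nearly finitary. Axioms (I1) and (I2) are immediate, and (I3) is already established in \autoref{thm:I3_ForLinkabilitySystems} for every dimaze, so the entire task reduces to proving (IM). The strategy is to compare the linkability system $\Ical := M_L(D,B_0)$ with its finitarisation $\Ical^{\mathrm{fin}}$, which is always a (finitary) matroid because every finite subsystem comes from a finite dimaze and hence satisfies Mason's theorem; in particular $\Ical^{\mathrm{fin}}$ satisfies (IM). The near-finitary hypothesis says that each maximal set of $\Ical^{\mathrm{fin}}$ lies within a bounded distance of some genuinely linkable set, and I want to transfer the existence of maximal sets from $\Ical^{\mathrm{fin}}$ down to $\Ical$.

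First I would fix $I\in\Ical$ and $X$ with $I\subseteq X\subseteq V$, and seek a maximal element of $\{I'\in\Ical : I\subseteq I'\subseteq X\}$. Since $\Ical^{\mathrm{fin}}$ is a matroid, it has a maximal element $B$ with $I\subseteq B\subseteq X$. Restricting attention to the subsystem inside $X$ (which is itself the linkability system of the dimaze induced on the relevant vertices, as in the reduction used in the proof of \autoref{thm:dagger2matroid}), I may assume $B$ is a maximal set of the finitarisation of the whole restricted system. By near-finitariness there is a linkable set $J\in\Ical$ with $|B\setminus J|<\infty$; after intersecting with $X$ and using (I2) I can take $J\subseteq B\subseteq X$ with $B\setminus J$ finite. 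The goal then becomes: starting from the honestly linkable set $J$, climb up inside $B$ to recover a maximal linkable set.

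The engine for this climb is \autoref{thm:I4_ForLinkabilitySystems}, the exchange property $(\ast)$. The idea is that $B\setminus J$ is a finite set of elements that are ``finitarily available'' but perhaps not simultaneously linkable; I want to trade them into a linkable set one at a time. Concretely, I would argue that some linkable $J'\subseteq B$ is maximal among linkable subsets of $B$ by a finite induction driven by the finiteness of $B\setminus J$: if a linkable $J'\subseteq B$ is not maximal in $\Ical\cap 2^B$, there is $v\in B\setminus J'$ with $J'+v$ linkable, and I use $(\ast)$ to control how the symmetric difference with $B$ evolves, ensuring the process only adds elements of the finite set $B\setminus J$ and therefore terminates. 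The resulting maximal linkable $J'\subseteq B$ should in fact be maximal in the full interval $\{I'\in\Ical : I\subseteq I'\subseteq X\}$, because any linkable superset would also be a set of the finitarisation contained in $X$, contradicting the maximality of $B$ in $\Ical^{\mathrm{fin}}$.

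The step I expect to be the main obstacle is making the finite climb from $J$ to a maximal linkable subset of $B$ rigorous, i.e.\ showing the induction terminates and lands exactly at a set maximal in $\Ical$. The subtlety is that adding one element via $(\ast)$ generally forces the removal of another, so the symmetric difference with $B$ need not shrink monotonically in cardinality in the naive way; I would need to set up the right potential function (for instance $|B\setminus J'|$ together with a well-ordering argument, or a direct application of $(\ast)$ that never deletes elements of $B$ already gained) so that the elements of the infinite ``core'' $J$ stay fixed and only the finitely many elements of $B\setminus J$ are shuffled. Once termination is secured, identifying the terminal set as maximal in the interval — rather than merely maximal inside $B$ — is the final checkpoint, and it hinges on the observation that $B$ itself witnesses the finitarised maximum, so no linkable set strictly above $J'$ can fit inside $X$.
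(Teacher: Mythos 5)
Your overall architecture coincides with the paper's: reduce everything to (IM), and derive (IM) from (I1), (I2), the exchange property $(\ast)$ of \autoref{thm:I4_ForLinkabilitySystems}, and near-finitariness. The paper, however, does not carry out this derivation itself; it closes the proof in one line by citing \cite[Lemma 4.15]{ACF}, which is precisely the implication ``(I1), (I2), $(\ast)$ and nearly finitary imply (IM)''. You attempt to prove that implication inline, and that is where the gaps are.

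Two concrete problems. First, the step you yourself flag as ``the main obstacle'' --- climbing from a linkable $J$ with $B\setminus J$ finite to a set maximal in $\{I'\in\Ical: I\subseteq I'\subseteq X\}$ --- is never actually carried out; the proposal is a plan, not a proof, exactly where all the work lies. Second, the closing maximality argument is wrong as stated: if $J''\supsetneq J'$ is linkable with $J''\subseteq X$, then indeed $J''\in\Ical^{\mathrm{fin}}$, but this contradicts the maximality of $B$ in $\Ical^{\mathrm{fin}}\cap 2^X$ only if $J''\supseteq B$, which there is no reason to expect. What is needed is an extremal choice (for instance, a linkable $J'$ with $I\subseteq J'\subseteq B$ minimizing the finite quantity $|B\setminus J'|$) together with an exchange argument via $(\ast)$ showing that any augmentation $J'+x$ with $x\in X\setminus B$ could be traded for an augmentation inside $B$, contradicting minimality; this is essentially the content of the cited lemma. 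A smaller point: your justification that $\Ical^{\mathrm{fin}}$ is a matroid (``every finite subsystem comes from a finite dimaze'') is not correct --- a linkage of a finite vertex set in an infinite dimaze may use arbitrarily long paths through infinitely many other vertices --- although the conclusion can be salvaged since finite restrictions satisfy augmentation by \autoref{thm:AW2Linkage}. Given the paper's framework, the cleanest course is simply to invoke \cite[Lemma 4.15]{ACF} rather than reprove it.
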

\begin{proof}
Since $M_L(D, B_0)$ satisfies (I1), (I2) and $(\ast)$, by \cite[Lemma 4.15]{ACF}, it also satisfies (IM). Hence, by \autoref{thm:I3_ForLinkabilitySystems}, it is a matroid.  
\end{proof}

The theorem shows that dimazes which contain an AC may also define matroids. An \emph{comb} is a dimaze consisting of a directed (double) ray $R$ and infinitely many non-trivial disjoint directed paths from $R$, which meet $R$ only at the initial vertices, onto the set of exits $B_0$. 

\begin{exm}
We construct a dimaze $(D, B_0)$ which defines a nearly finitary linkability system, by identifying the corresponding exits of $n>1$ copies of a comb defined via a ray without terminal vertex (see \autoref{fig:turbine}).   
Note that $(D, B_0)$ contains an AC; and $M_L(D, B_0)$ is not finitary (a vertex not in $B_0$ together with all reachable vertices in $B_0$ form an infinite circuit\footnote{Any 
circuit in a finitary matroid is finite.}). 

We check that $M_L(D, B_0)$ is nearly finitary. 
Let $B$ be a maximal element in $M_L(D, B_0)^{\mbox{\small fin}}$. Let $I$ be the set obtained from $B$ by deleting the last vertex, if exists, of $B$ on each ray in $D- B_0$; and $T := B\- I$. 
Fix an enumeration for $I$. Note that for any integer $k\geq 1$, there are only finitely many linkages from $[k]$ to $B_0$ disjoint from $T$. In fact, there is at least one: the restriction to $[k]$ of a linkage of the finite subset $[k]\cup T$ of $B$. Applying the infinity lemma (\cite[Proposition 8.2.1]{Die10}), with the $k$th set consisting of the finite non-empty collection of linkages from $[k]$ to $B_0$ disjoint from $T$, we obtain a linkage from $I$ to $B_0$. Hence, $I\in M_L(D, B_0)$. As $B$ is arbitrary and $|T|\leq n$, we conclude that $M_L(D, B_0)$ is nearly finitary. 
\end{exm}

\begin{figure}
 \centering
 \includegraphics[width = 10cm]{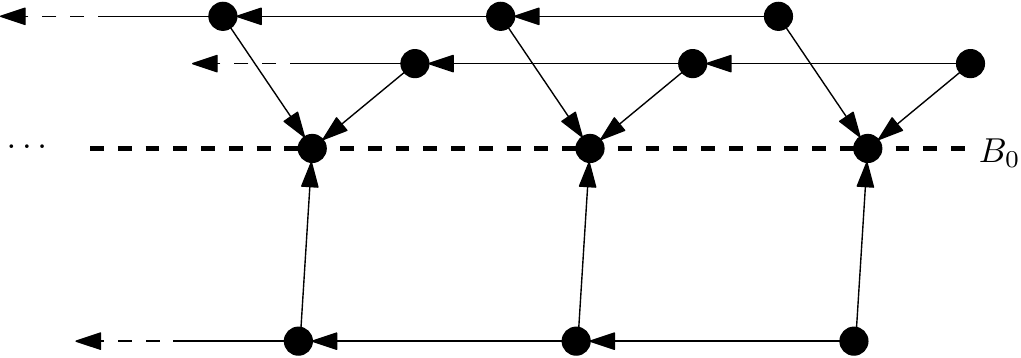}
 \caption{A dimaze that defines a nearly finitary linkability system}
 \label{fig:turbine}
 \end{figure} 

On the other hand, \autoref{thm:NearlyFinitaryGammoid} does not imply \autoref{thm:noAC2Gammoid}. 

\begin{exm}
\label{thm:graph101}
Take infinitely many disjoint copies of a comb on a directed double ray. Add two extra vertices, and an edge from every vertex on the double ray of each comb to those two vertices. Take all the vertices of out-degree 0 as the exits. Then this dimaze defines a matroid that is 3--connected, not nearly finitary and whose dual is not nearly finitary. 
\end{exm}

So far we have seen that if a dimaze $(D, B_0)$ does not contain any AC or that $M_L(D, B_0)$ is nearly finitary, then $M_L(D, B_0)$ is a matroid. However, there are examples of strict gammoids that lie in neither of the two classes. All our examples of dimazes that do not define a matroid share another feature other than possessing an AC: there is an independent set $I$ that cannot be extended to a maximal in $I\cup B_0$. In view of this, we propose the following. 

\begin{con}
Suppose that for all $I\in M_L(D, B_0)$ and $B\subseteq B_0$, there is a maximal independent set in $I\cup B$ extending $I$. Then (IM) holds for~$M_L(D, B_0)$. 
\end{con}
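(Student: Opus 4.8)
The axioms (I1), (I2) hold trivially and (I3) holds by \autoref{thm:I3_ForLinkabilitySystems}, so the whole content is to verify (IM) for an arbitrary interval: given $I\in M_L(D,B_0)$ and $I\subseteq X\subseteq V$, produce a maximal independent set in $[I,X]:=\{Z\in M_L(D,B_0):I\subseteq Z\subseteq X\}$. The plan is to first reduce, by the machinery already used in the proof of \autoref{thm:dagger2matroid}, to the special case in which the lower endpoint is linkable \emph{onto} $B_0$, and then to exploit the hypothesis, whose domain is exactly the intervals of the form $[I',I'\cup B']$ with $B'\subseteq B_0$.

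\emph{Reduction to a spanning lower endpoint.} Fix a red linkage from $I$ and apply the Aharoni--Berger--Menger theorem to $X$ and $B_0$ to obtain a blue linkage $\Qcal$ from some $B\subseteq X$ together with an $X$--$B_0$ separator $S$ on $\Qcal$; pass to the sub-dimaze $(H,S)$, so that linkability of subsets of $V(H)\supseteq X$ coincides in $(D,B_0)$ and in $(H,S)$. The \hyperref[thm:linkageThm]{linkage theorem} produces a linkage from some $X^\infty\in[I,X]$ onto $S$, and, applying it once more to an arbitrary linkage from $Y$ together with this one, every independent $Y\in[X^\infty,X]$ is seen to be linkable onto $S$. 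Writing $W:=B_0\setminus V(\Qcal)$ for the exits missed by $\Qcal$ (one checks $W\cap X=\emptyset$, since every exit of $X$ must lie on $S$ and hence on $\Qcal$), composing a linkage of $Y$ onto $S$ with the terminal segments of $\Qcal$ and adjoining trivial paths on $W$ shows that $Y\mapsto Y\cup W$ is an inclusion-preserving bijection from the independent sets in $[X^\infty,X]$ onto the independent sets of $(D,B_0)$ in $[J,X']$, where $J:=X^\infty\cup W$ is linkable onto $B_0$ and $X':=X\cup W$. A maximal element of $[J,X']$ therefore yields a maximal element of $[X^\infty,X]$, which, as $I\subseteq X^\infty$, is the required maximal element of $[I,X]$. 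Hence it suffices to prove (IM) for intervals $[J,X']$ whose lower endpoint $J$ is linkable onto $B_0$.

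\emph{The spanning case.} Here the hypothesis enters. First, a further application of the linkage theorem (comparing a linkage from $Z$ with one from $J$ onto $B_0$) shows that every independent $Z\in[J,X']$ is itself linkable onto $B_0$, so that all members of the interval have ``full rank''. If $X'\subseteq J\cup B_0$ we are already done: then $X'=J\cup B'$ with $B':=X'\cap B_0\subseteq B_0$, and the hypothesis applied to $J$ and $B'$ delivers the desired maximal element. The remaining obstruction is the set $N:=X'\setminus(J\cup B_0)$ of \emph{non-exit} candidates. To reabsorb these into the exits, fix a linkage $\mathcal{R}=\{R_b:b\in B_0\}$ witnessing $J$ onto $B_0$; the idea is that adjoining a vertex of $N$ to an independent set forces, via the linkage theorem, a re-routing that displaces exactly one terminal vertex, so that the inclusion of an $N$-vertex can be traded for the release of an exit. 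One would use this to set up an order correspondence between maximal elements of $[J,X']$ and maximal elements of an associated exit-interval to which the hypothesis applies.

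\emph{The main obstacle.} The difficulty is precisely this last correspondence. In a finite dimaze a single augmenting re-routing handles one extra vertex and the trade with an exit is immediate; in the infinite setting the vertices of $N$ must be re-routed \emph{simultaneously and coherently}, whereas the linkage theorem only delivers one limiting re-routing at a time. Producing a \emph{maximal} consistent re-routing pattern is itself an (IM)-type assertion, so the plan is to anchor maximality at the exits---where the hypothesis guarantees maxima---and to push it inward along $\mathcal{R}$ by a transfinite construction, using the strong exchange property $(\ast)$ of \autoref{thm:I4_ForLinkabilitySystems} at successor steps to keep the partial objects independent. Controlling the limit steps of this construction, where a chain of independent sets may have non-independent union (exactly the phenomenon (IM) must overcome), is where the argument is most likely to break, and is the heart of what the conjecture asserts can always be resolved once every exit-interval behaves well.
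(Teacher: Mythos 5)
The statement you are proving is stated in the paper as an open \emph{conjecture}; the authors give no proof, so there is nothing to compare your argument against except its own internal completeness --- and by your own admission it is not complete. Your reduction step is sound and closely follows the proof of \autoref{thm:dagger2matroid}: passing to $(H,S)$ via the Aharoni--Berger--Menger theorem, producing $X^\infty$ with the linkage theorem, and observing that $Y\mapsto Y\cup(B_0\setminus V(\Qcal))$ carries the interval $[X^\infty,X]$ order-isomorphically onto an interval whose lower endpoint is linkable onto $B_0$, all checks out (including the small point that $X\cap B_0\subseteq S\subseteq V(\Qcal)$). The special case $X'\subseteq J\cup B_0$ is then genuinely disposed of by the hypothesis, since that hypothesis is exactly an (IM)-statement for intervals of the form $[J,J\cup B']$ with $B'\subseteq B_0$.

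The gap is the entire remaining case, where $N=X\setminus(X^\infty\cup B_0)$ is nonempty --- and this is not a boundary case but the generic one (take $I=\emptyset$ and $X=V$). The ``trade an $N$-vertex for an exit'' mechanism you invoke is, in substance, the single-element exchange $(\ast)$ of \autoref{thm:I4_ForLinkabilitySystems}; it moves one vertex at a time, and the proposed transfinite iteration must then survive limit stages, where an increasing chain of independent sets can have a non-independent union (the half-grid of \autoref{exm:halfGrid} exhibits precisely this). You do not supply any mechanism for controlling these limits, nor a precise definition of the ``associated exit-interval'' and the order correspondence onto it, and you explicitly flag this as the place the argument is likely to break. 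Since overcoming exactly this limit-stage failure is the content of (IM), the proposal reduces the conjecture to an equally hard unproved assertion rather than proving it. As it stands this is a reasonable plan of attack on an open problem, not a proof.
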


\section{Dimazes with alternating combs}
\label{sec:withAR}
We have seen in \autoref{sec:strictGammoids} that forbidding AC in a dimaze guarantees that it defines a strict gammoid. However, the AC in \autoref{fig:ARIC} defines a finitary strict gammoid. On the other hand, this strict gammoid is isomorphic to the one defined by an \emph{incoming comb} via the isomorphism given in the figure. So one might think that every strict gammoid has a defining dimaze which does not contain any AC. We will prove that this is not the case with two intermediate steps. In \autoref{sec:connectivity}, we derive a  property satisfied by any strict gammoid defined by a dimaze without any AC. In \autoref{sec:treesTransversal}, we show that any tree defines a transversal matroid. We then construct a strict gammoid which cannot be defined by a dimaze without any AC. 

\begin{figure}[hbt]
 \centering
 \includegraphics{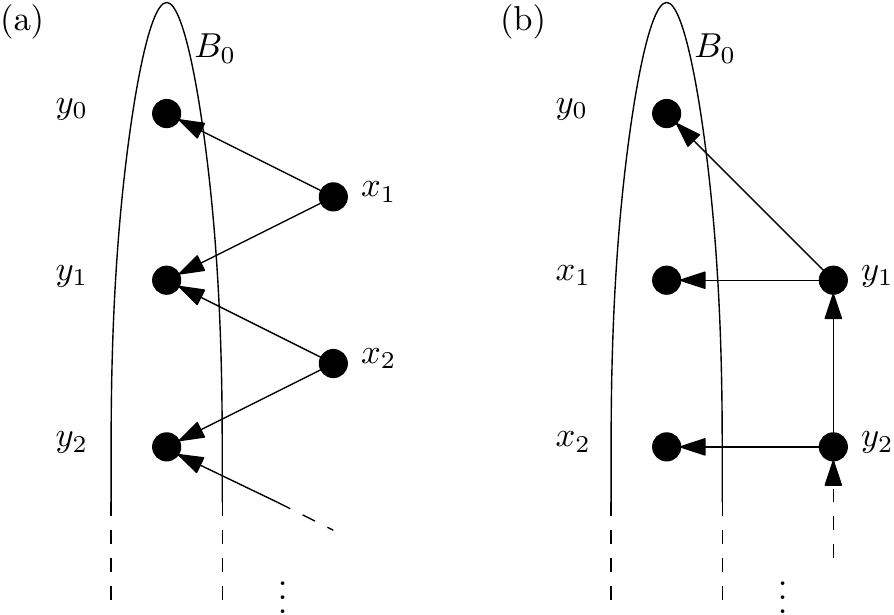}
 \caption{An alternating comb and an incoming comb defining isomorphic strict gammoids}
 \label{fig:ARIC}
 \end{figure}

\subsection{Connectivity}
\label{sec:connectivity}
Connectivity in finite matroids stems from graph connectivity and is a well established part of the theory. In the infinite setting, Bruhn and Wollan \cite{BW12} gave the following rank-free definition of connectivity that is compatible with the finite case. For an integer $k\geq 0$, a \emph{$k$--separation} of a matroid is a partition of $E$ into $X$ and $Y$ such that both $|X|, |Y|\geq k$ and for any pair of bases $B_X, B_Y$ of $M\- Y$ and $M\- X$ respectively, the number of elements to be deleted from $B_X\cup B_Y$ to get a base of $M$ is less than $k$. It was shown there that this number does not depend on the choice of $B_X$ or $B_Y$ or the deleted set. A matroid is \emph{$k$--connected} if there are no $l$--separations for any $l<k$. 
If a matroid does not have any $k$--separations for any integer $k$, then it is \emph{infinitely connected}. 
Recall that the only infinitely connected finite matroids are uniform matroids of rank about half of the size of the ground set (see \cite[Chapter 8]{Oxl92}) and they are strict gammoids. It seems natural to look for an infinitely connected infinite matroid among strict gammoids, but here we give a partial negative result. It remains open whether there is an infinitely connected infinite gammoids. 

\begin{lem}
\label{thm:noFiniteCC}
If a dimaze $(D, B_0)$ does not contain any AC, then $M_L(D, B_0)$ contains a finite circuit or cocircuit. 
\end{lem}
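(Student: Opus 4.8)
The plan is to argue the contrapositive:~assuming $M_L(D,B_0)$ has neither a finite circuit nor a finite cocircuit, I construct an alternating comb, contradicting the hypothesis that $(D,B_0)$ contains none. First I record reductions. If some vertex $v\notin B_0$ cannot reach $B_0$, then $\{v\}$ is a finite circuit, so I may assume there are no loops; and if $V=B_0$ or $B_0=\emptyset$ one checks directly that a finite cocircuit (a coloop) or a finite circuit (a loop) appears. By \autoref{thm:gammoidBaseCriterion} and \autoref{thm:dagger2matroid}, $M_L(D,B_0)$ is a matroid whose bases are exactly the sets linkable onto $B_0$, so $B_0$ itself is a base and the notions of fundamental circuit and cocircuit with respect to $B_0$ make sense.

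The key reformulation uses that every exit is a sink, so a directed path can meet $B_0$ only in its terminal vertex. Hence for $v\notin B_0$ a directed $v$--$b$ path automatically avoids $B_0$ internally, and routing it while keeping the remaining exits on trivial paths shows, via \eqref{eqn:linkableOnto}, that $(B_0-b)+v$ is a base precisely when $v$ reaches $b$ in $D$. Consequently the fundamental circuit of a non-loop $v\notin B_0$ is $C(v,B_0)=\{v\}\cup\{b\in B_0: v \text{ reaches } b\}$ and the fundamental cocircuit of $b\in B_0$ is $C^*(b)=\{b\}\cup\{x\notin B_0: x \text{ reaches } b\}$. Since these are genuine circuits and cocircuits, the assumption that none is finite yields that \emph{every} non-loop vertex reaches infinitely many exits and \emph{every} exit is reached by infinitely many non-exit vertices. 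Moreover the stronger assumption that there is no finite circuit at all means, by the Aharoni--Berger--Menger theorem \cite{AB09}, that no finite set is an $F$--$B_0$ bottleneck:~every finite $F\subseteq V$ is linkable to $B_0$ by disjoint paths, which is exactly the flexibility I will exploit to reroute around any finite set already used.

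With these reachability conditions in hand I construct the alternating comb as a ``ladder'' of internally disjoint finite paths. Writing $b_0,b_1,b_2,\dots\in B_0$ for the peaks and $x_1,x_2,\dots\notin B_0$ for the valleys, I build for each $i\geq1$ a path $R_i$ from $x_i$ to $b_{i-1}$ and a path $S_i$ from $x_i$ to $b_i$, so that $x_i$ has out-degree $2$ and each $b_i$ receives the terminal edges of $S_i$ and $R_{i+1}$ and hence has in-degree $2$. If all these paths are internally disjoint and overlap only in the prescribed vertices $R_i\cap S_i=\{x_i\}$ and $S_i\cap R_{i+1}=\{b_i\}$, their union is a ray with infinitely many in-degree-$2$ vertices whose peaks lie in $B_0$; attaching the trivial teeth at the exits $b_i$ and at $b_0$ then exhibits an alternating comb. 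The induction alternates between the two conditions:~having reached the fresh exit $b_i$ through $S_i$, I use that $b_i$ is reached by infinitely many non-exits to pick a new valley $x_{i+1}$ and a path $R_{i+1}$ to $b_i$, and then use that $x_{i+1}$ reaches infinitely many exits to pick a fresh exit $b_{i+1}$ and a path $S_{i+1}$; at each stage only finitely many vertices have been committed.

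The main obstacle is keeping the paths internally disjoint, that is, routing each newly chosen $R_{i+1}$ or $S_{i+1}$ so that it avoids the finite set $U_i$ of vertices used so far, apart from its prescribed endpoint. Reachability alone does not guarantee this, since the infinitely many paths witnessing a reachability condition might all funnel through a single vertex of $U_i$; this is precisely where the absence of a finite circuit and of a finite cocircuit is needed, to rule out such finite bottlenecks through the Aharoni--Berger--Menger theorem. I expect to secure a new valley reaching a shared peak by a disjoint path by combining the infinitude of $C^*(b_i)$ with the no-bottleneck property, if necessary taking the shared peak to be a non-exit merge vertex with a tooth routed to an exit, and to pass from the finite approximations to the full ray by the infinity lemma (\cite[Proposition~8.2.1]{Die10}). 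Once the ladder is complete it is an alternating comb contained in $(D,B_0)$, contradicting the hypothesis; therefore $M_L(D,B_0)$ must contain a finite circuit or a finite cocircuit.
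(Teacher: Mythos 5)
Your setup agrees with the paper's: under the no-AC hypothesis $M_L(D,B_0)$ is a matroid with base $B_0$, the absence of finite circuits and cocircuits is equivalent to every finite vertex set being independent and coindependent, and your descriptions of the fundamental circuit of $v\notin B_0$ and the fundamental cocircuit of $b\in B_0$ in terms of reachability are correct. The overall plan --- greedily grow an alternating comb and contradict the hypothesis --- is also the paper's. The problem is that the inductive step, which you yourself isolate as ``the main obstacle'', is where the entire proof lives, and the repair you sketch for it does not work. You propose to find the next valley by exploiting the infinitude of $C^*(b_i)$ for the \emph{particular} exit $b_i$ you just reached (or, failing that, a merge vertex on the particular path $S_i$). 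This can fail outright: the set of vertices reaching $b_i$ may be a single backward path $\cdots\to u_2\to u_1\to b_i$ in which every vertex has in-degree $1$; then $C^*(b_i)$ is infinite and every finite set is linkable, yet no second approach to $b_i$, internally disjoint from $S_i$, exists, and no vertex of $S_i$ can serve as a merge vertex either. Neither Aharoni--Berger--Menger nor a ``no finite bottleneck'' property helps here, because the obstruction is not a separator but the in-degree structure along the one target you committed to in advance.

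The missing idea is to not fix the target. The paper's step is: from the current valley $v_k$, use independence of finite sets to fan out $n+1$ paths to $B_0$ (where $n$ is the number of vertices used so far), pairwise disjoint except at $v_k$ and avoiding the used set; then use \emph{coindependence} of the finite union of this fan with the used set to get a base in its complement, which by \autoref{thm:gammoidBaseCriterion} is linkable \emph{onto} $B_0$, hence onto all $n+1$ endpoints of the fan; by pigeonhole one of these $n+1$ linkage paths avoids the used set entirely, and since it starts outside the fan and ends at an endpoint of the fan, its first entry point into the fan yields both the merge vertex $w_k$ (of in-degree $2$, whose tooth is the tail of the fan path) and the fresh valley $v_{k+1}$ (its predecessor on the entering path). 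Your proposal invokes ``no finite cocircuit'' only through fundamental cocircuits of single exits; the form actually needed is that the complement of every finite set contains a base linkable onto $B_0$. Two smaller points: an exit of in-degree $1$ can never be a peak, so in general the in-degree-$2$ vertices of the comb are non-exits with nontrivial teeth, which makes your ``ladder with exits as peaks'' picture the exception rather than the rule; and since each step genuinely extends the previously built finite structure, the union is already the comb, so the infinity lemma is neither needed nor applicable.
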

\begin{proof}
Suppose the lemma does not hold. Then every finite subset of $V$ is independent and coindependent, and $B_0$ is infinite. We construct a sequence $(R_k: k\geq 1)$ of subdigraphs of $D$ that gives rise to an AC for a contradiction. 

Let $v_1\notin B_0$ and $R_1:=v_1$. For $k\geq 1$, we claim that there is a path $P_k$ from $v_k$ to $B_0$ such that $P_k\cap V(R_k) = \{v_k\}$, a vertex $w_k$ on $\stackrel{\circ}{v_k}\!\!P_k$, and a vertex $v_{k+1}\notin V(R_k)\cup P_k$ with $(v_{k+1}, w_k)\in E(D)$. Let $R_{k+1}:=R_k\cup P_k \cup (v_{k+1}, w_k)$.  

Indeed, since any finite set containing $v_k$ is independent, there is a path from $v_k$ avoiding any given finite set disjoint from $v_k$. Hence, there is a set $\Fcal$ of $|V(R_k)|+1$ disjoint paths (except at $v_k$) from $v_k$ to $B_0$ avoiding the finite set $V(R_k) -v_k$. 
Since $V(\Fcal)\cup R_k$ is coindependent, its complement contains a base $B$, witnessed by a linkage $\Pcal$. 
Since $|V(\Fcal)\cap B_0|> |V(R_k)|$ and $\Ter(\Pcal)= B_0$, there is a path $P\in \Pcal$ that is disjoint from $R_k$ and ends in $V(\Fcal)\cap B_0$. Then the last vertex $v_{k+1}$ of $P$ before hitting $V(\Fcal)$, the next vertex $w_k$, and the segment $P_k:= w_kP$ satisfy the requirements of the claim. By induction, the claim holds for all $k\geq 1$. 

Let $R:=\bigcup_{k\geq 1} R_k$. Then $(R, V(R)\cap B_0)$ is an AC in $(D, B_0)$. This contradiction completes the proof.
\end{proof}

In an infinite matroid that is infinitely connected, the bipartition of the ground set into any finite circuit of size $k$ against the rest is a $k$--separation. Hence, such a matroid must not have finite circuits or cocircuits. 
\begin{cor}
\label{thm:noARconnectivity}
If an infinite dimaze $(D, B_0)$ does not contain any AC, then $M_L(D, B_0)$ is not infinitely connected.
\end{cor}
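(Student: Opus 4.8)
The plan is to read off the corollary from \autoref{thm:noFiniteCC} together with the observation recorded immediately before its statement: an infinitely connected matroid can have neither a finite circuit nor a finite cocircuit. First I would note that $M_L(D, B_0)$ is in fact a matroid, which is exactly \autoref{thm:noAC2Gammoid} since $(D, B_0)$ contains no AC; this is what makes the notions of circuit, cocircuit and $k$--separation meaningful, and as $V = V(D)$ is infinite, the matroid is infinite.

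Next I would apply \autoref{thm:noFiniteCC} to obtain a finite circuit or a finite cocircuit, of size $k$ say. The substance of the argument is to verify that such a set produces a $k$--separation, contradicting infinite connectivity. Consider first a finite circuit $C$ of size $k$ and put $X := C$ and $Y := V\- C$. Both blocks have size at least $k$, since $|X| = k$ and $Y$ is cofinite in the infinite set $V$, hence infinite. A base $B_X$ of $M\- Y$ (the restriction to $C$) is a maximal independent subset of the circuit $C$, so $|B_X| = k - 1$. For any base $B_Y$ of $M\- X$, the union $B_X \cup B_Y$ spans $M$, because $B_X$ spans $C$ and $B_Y$ spans $Y$; so by (IM) the independent set $B_Y$ extends to a base $B$ of $M$ with $B_Y \subseteq B \subseteq B_X \cup B_Y$. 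The elements deleted from $B_X \cup B_Y$ to reach $B$ then all lie in $B_X$, whence their number is at most $k - 1 < k$. Thus $(X, Y)$ is a $k$--separation.

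For a finite cocircuit I would run the identical computation in the dual $M_L(D, B_0)^{*}$, relying on the self--duality of the Bruhn--Wollan notion of $k$--connectivity \cite{BW12}: a finite cocircuit of $M_L(D, B_0)$ is a finite circuit of the dual and again yields a $k$--separation. In either case $M_L(D, B_0)$ admits a $k$--separation for a finite $k$, so it is not infinitely connected.

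The main obstacle is not the logical skeleton, which is short, but the rank--free bookkeeping behind the separation: in the infinite setting one cannot simply evaluate $r(X) + r(Y) - r(E)$ with possibly infinite ranks, so the bound on the number of deleted elements must come directly from the Bruhn--Wollan definition as above, using the stated fact that this number is independent of the chosen bases $B_X, B_Y$. Since this is precisely the content of the remark preceding the corollary, the corollary itself reduces to citing \autoref{thm:noFiniteCC} and that remark.
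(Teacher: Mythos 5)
Your proposal is correct and follows the paper's own route exactly: the paper derives the corollary from \autoref{thm:noFiniteCC} together with the remark that in an infinitely connected infinite matroid a finite circuit (or, dually, cocircuit) of size $k$ would yield a $k$--separation. You merely spell out the separation bookkeeping that the paper leaves implicit, and you do so correctly.
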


\subsection{Trees and transversal matroids}
\label{sec:treesTransversal}
In this section, the aim is to prove that a tree defines a transversal matroid. We then construct a strict gammoid which cannot be defined by a dimaze without any AC. 

Given a bipartite graph $G$, fix an ordered bipartition $(V,W)$ of $V(G)$; this induces an ordered bipartition of any subgraph of $G$. A subset of $V$ is \emph{independent} if it is matchable to $W$. Let $M_T(G)$ be the pair of $V$ and the collection of independent sets. It is clear that (I1), (I2) hold for $M_T(G)$. When $G$ is finite, (I3) also holds \cite{EF65}. The proof of this fact which uses alternating paths can be extended to show that (I3) also holds when $G$ is infinite. 

Let $m$ be a matching. An edge in $m$ is called an $m$--edge. An $m$--\emph{alternating path} is a path or a ray that starts from a vertex in $V$ such that the edges alternate between the $m$--edges and the non-$m$--edges. An $m$--$m'$ \emph{alternating path} is defined analogously with $m'$, also a matching, replacing the role of the non-$m$--edges. 

\begin{lem}
For any bipartite graph $G$, $M_T(G)$ satisfies (I3).
\end{lem}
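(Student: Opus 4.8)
The plan is to prove the augmentation axiom (I3) for $M_T(G)$ by the standard alternating-path argument, taking care that in the infinite setting the alternating object I follow may be a ray rather than a finite path, and that the relevant separation statement (some form of König/Menger) remains available. Let me set up the two independent sets. Suppose $I$ is a non-maximal independent set of $M_T(G)$ and $B$ is a maximal independent set, with matchings $m_I$ (from $I$ into $W$) and $m_B$ (from $B$ into $W$) witnessing their independence. My goal is to find $x\in B\setminus I$ such that $I+x$ is matchable to $W$.

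First I would form the symmetric difference $m_I\triangle m_B$ and analyse its components in the usual way. Each component is a path or ray in which edges alternate between $m_I$-edges and $m_B$-edges; the vertices of $V$ that are endpoints of such components are exactly the $V$-vertices covered by exactly one of the two matchings. The key observation is that a vertex $v\in I$ is matched by $m_I$ but may or may not be matched by $m_B$, and symmetrically for $B$; I want to locate an $m_I$-alternating path (or ray) starting at some unmatched-by-$m_I$ vertex of $B\setminus I$ and use it to augment. Concretely, I would try to show that if no such augmenting structure existed, then $I$ could be "completed" against $B$ so as to contradict either the maximality of $B$ or the non-maximality of $I$. The cleanest route is: consider the components of $m_I\triangle m_B$; each finite alternating path with both endpoints in $V$ contributes equally to $I$ and $B$, while the asymmetry forcing $I$ to be extendable must surface as a component that is either an $m_I$-alternating ray or a finite alternating path whose $V$-endpoint lies in $B\setminus I$ and is unmatched by $m_I$. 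Flipping the matching along such a (finite) path produces a matching of $I+x$ for the appropriate $x\in B\setminus I$.

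The main obstacle I expect is the case of an infinite alternating ray, which has no "far end" to flip, so a naive augmentation along a ray does not immediately enlarge $I$. To handle this I would argue that one can always find a suitable finite augmenting path: since $I$ is not maximal there is some $z$ with $I+z$ independent, witnessed by a matching $m'$; I would instead compare $m_I$ with $m'$ (or use $B$ only to locate candidate elements) to extract a finite $m_I$-alternating path from an uncovered $V$-vertex to an uncovered $W$-vertex, exactly as in the finite theorem of Edmonds–Fulkerson. The subtlety the authors have flagged by defining $m$-alternating and $m$--$m'$ alternating paths (allowing rays) is precisely that infinite components can occur; my plan is to show these infinite components can be disregarded for the augmentation step because the augmenting path supplied by non-maximality of $I$ is finite, and that the element $x$ it produces can be taken inside $B\setminus I$ by a separator/defect argument comparing $m_I$ and $m_B$.

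In summary, the proof I would write proceeds by: (i) fixing matchings $m_I, m_B$ for $I$ and a maximal $B$; (ii) examining the components of $m_I\triangle m_B$ and classifying them into balanced finite paths, augmenting finite paths, and rays; (iii) showing that the existence of some $x\in B\setminus I$ with $I+x$ independent reduces to finding a finite augmenting $m_I$-alternating path ending at an $m_I$-unmatched $V$-vertex of $B\setminus I$; and (iv) verifying that if no such finite path existed, the $V$-vertices reachable from $I$ by $m_I$-alternating paths would form a separator-type obstruction contradicting the non-maximality of $I$ against the maximal $B$. The hard part is rigorously discharging the infinite-ray case in step (iii)–(iv), for which I would lean on the finiteness of the augmenting path coming from any single witness of non-maximality rather than attempting to flip along a ray.
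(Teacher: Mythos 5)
There is a genuine gap, and it sits exactly where you locate ``the hard part''. Your plan is built around the components of $m_I\triangle m_B$, but this symmetric difference need not contain any component that augments $I$ by an element of $B\setminus I$. A component starting at $x\in B\setminus I$ (which is covered by $m_B$ but not $m_I$) either is a ray, or ends at a $W$-vertex uncovered by $m_I$ --- in both of these cases flipping $m_I$ along it does match $I+x$, and note, contrary to your stated worry, that flipping along a one-way alternating ray from an uncovered vertex \emph{does} augment the matching --- or else it ends at a vertex of $I\setminus B$, in which case flipping only yields the exchange $I+x-v$. All components can be of this third, useless kind: take $W=\{w_i:i\geq 1\}$, let $B=\{a_i\}$ with $a_i$ joined only to $w_i$, and $I=\{b_i\}$ with $b_i$ joined to $w_i$ and $w_{i+1}$; then $B$ is maximal, $I$ is not (indeed $I+a_1$ is independent via $b_i\mapsto w_{i+1}$), yet with $m_I=\{b_iw_i\}$ and $m_B=\{a_iw_i\}$ every component of $m_I\triangle m_B$ is the bad path $a_iw_ib_i$. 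So steps (ii)--(iii) of your plan do not reduce the problem as claimed. Your fallback --- comparing $m_I$ with a matching $m'$ of $I+z$ --- only re-derives that $I+z$ is independent and gives no control over whether the augmenting element lies in $B$; the ``separator/defect argument'' that is supposed to relocate it into $B\setminus I$ is left unspecified, and it is precisely the content of the lemma.

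The paper compares a different pair of matchings, and that is the whole point: let $m$ be a matching of $I+x$ supplied by the non-maximality of $I$, and $m'$ a matching of $B$, and follow the $m$--$m'$ alternating path $P$ from $x$. This path cannot be infinite and cannot end in $W\setminus V(m')$, because in either case $m'\Delta E(P)$ would be a matching of $B+x$, contradicting the \emph{maximality of $B$}; hence $P$ must terminate at some $y$ covered by $m'$ but not by $m$, i.e.\ $y\in B\setminus(I+x)$, and then $m\Delta E(P)$ is a matching of $I+y$. In other words, the infinite ray is not an obstacle to be circumvented by hunting for a finite path; it is excluded outright by the maximality of $B$, and the same exclusion is what forces the endpoint of $P$ into $B\setminus I$. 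Your proposal never invokes the maximality of $B$ in this way, which is why it cannot close.
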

\begin{proof}
Let $I, B\in M_T(G)$ such that $B$ is maximal but $I$ is not. 
As $I$ is not maximal, there is a matching $m$ of $I+x$ for some $x\in V\- I$. Let $m'$ be a matching of $B$ to $W$.  Start an  $m$--$m'$ alternating path $P$ from $x$. 
By maximality of $B$, the alternating path is not infinite and cannot end in $W\- V(m')$. So we can always extend it until it ends at some $y\in B\- I$. Then $m\Delta E(P)$ is a matching of $I+y$, which completes the proof. 
\end{proof}

If $M_T(G)$ is a matroid, it is called a \emph{transversal matroid}. 
For $X\subseteq V$, the restriction of $M_T(G)$ to $X$ is also a transversal matroid, and can be defined by the independent sets of the subgraph of $G$ induced by $X\cup N(X)$.

Suppose now $G$ is a tree rooted at a vertex in $W$. By upwards (downwards), we mean towards (away from) the root. For any vertex set $Y$, let $N^{\uparrow}(Y)$ be the upward neighbourhood of $Y$, and $N^{\downarrow}(Y)$ the set of downward neighbours. An edge is called \emph{upward} if it has the form $\{v, N^\uparrow(v)\}$ where $v\in V$, otherwise it is \emph{downward}. 

We will prove that $M_T(G)$ is a matroid.
For a witness of (IM), we inductively construct a sequence of matchings $(m^\alpha :\alpha \geq 0)$, indexed by ordinals, of $I^\alpha := V(m^\alpha) \cap V$.

Given $m^{\beta-1}$, to define a matching for $\beta$, we consider the vertices in $V\- I^{\beta-1}$ that do not have unmatched children for the first time at step $\beta-1$. We ensure that any such vertex $v$ that is also in $I$ is matched in step $\beta$, by exchanging $v$ with a currently matched vertex $r_v$ that is not in $I$. 

When every vertex that has not been considered has an unmatched child, we stop the algorithm, at some step $\gamma$. We then prove that the union of all these unconsidered vertices and $I^\gamma$ is a maximal independent superset of $I$. 

\begin{thm}
\label{thm:transversalTree}
For any tree $G$ with an ordered bipartition $(V,W)$, $M_T(G)$ is a transversal matroid. 
\end{thm}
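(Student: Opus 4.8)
Since (I1) and (I2) are immediate for $M_T(G)$ and (I3) was verified in the lemma just proved, the only axiom left to establish is (IM). The plan is to reduce (IM) to a statement about a single rooted tree and then to produce the required maximal set by the transfinite matching process outlined before the theorem. For the reduction, fix $I\in M_T(G)$ and a set $X$ with $I\subseteq X\subseteq V$. By the restriction property recorded above, the restriction of $M_T(G)$ to $X$ is defined by the independent sets of the induced subgraph $G':=G[X\cup N(X)]$, which is a \emph{forest}. The matchable subsets of $G'$ are exactly the disjoint unions, over the components of $G'$, of matchable subsets of each component, so a maximal independent superset of $I$ is obtained by combining a maximal independent superset of $I$ in each component. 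Hence it suffices to treat a single tree. Components that are isolated vertices are harmless (an isolated vertex of $V$ is non-matchable, and so lies in no independent set), and every remaining component contains a vertex of $W$, at which I root it. The task thus becomes: in a tree rooted at a vertex of $W$, every independent set $I$ extends to a maximal independent set.

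I would now run the recursion $(m^\alpha)$ described above, keeping two invariants: once a vertex of $I$ is matched it is never released, because every exchange step displaces only a vertex $r_v\notin I$; and at every stage the matched vertices $I^\alpha$ together with the vertices not yet considered contain $I$. At a successor stage $\beta$ I match each freshly considered $v\in I$ — that is, each $v\in I\setminus I^{\beta-1}$ all of whose children have just become matched — by the exchange $v\leftrightarrow r_v$ along the tree; at a limit stage I take the union of the earlier matchings, which is again a matching because on each finite part of the tree they have already stabilised. The recursion stops at the first ordinal $\gamma$ at which every not-yet-considered vertex still possesses an unmatched child, and I put $B:=I^\gamma\cup\{v\in V: v \text{ not considered}\}$. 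The easy half is that $B$ is independent: each unconsidered $v$ can be matched to one of its unmatched children $w_v$, and since in a tree every vertex of $W$ has a unique parent, these $w_v$ are pairwise distinct and avoid $V(m^\gamma)$, so $m^\gamma$ together with the pairs $(v,w_v)$ is a matching of $B$; and $I\subseteq B$ by the invariants.

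The crux is maximality: that $B+u$ is non-matchable for every $u\in V\setminus B$. Such a $u$ was considered and left unmatched, with all of its children matched at stage $\gamma$, so any matching of $B+u$ would reveal, via its symmetric difference with the matching of $B$, an $m^\gamma$-alternating path issuing from $u$ that matches $u$ \emph{upward}. Here I would use that paths in a tree are unique, so this alternating path is completely determined: it can only run monotonically toward the root, where it must terminate at a matched vertex, or descend forever along a ray. The stopping rule at $\gamma$ is designed precisely so that no such path can reach an exposed vertex of $W$ and so that the downward alternatives are infinite rays carrying no gain; hence no augmentation is possible and $B$ is maximal. I expect this maximality argument, together with checking that the exchanges are well defined and that the displaced ``hole'' $r_v$ does not cascade endlessly downward (so that the process genuinely stabilises at an ordinal $\gamma$), to be the main obstacle: it is exactly here that the bookkeeping of which vertices have unmatched children and the uniqueness of paths in a tree must be combined.
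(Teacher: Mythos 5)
Your overall strategy is the paper's: run a transfinite exchange process on the rooted tree, stop when every unconsidered vertex retains an unmatched child, and take $B$ to be the matched vertices together with the unconsidered ones. The reduction to a single rooted tree and the verification that $B$ is independent and contains $I$ are fine. But there are genuine gaps. First, your successor step matches only the freshly saturated vertices that lie in $I$. The paper's algorithm additionally matches, for every still-unmatched $w\in W$, one freshly saturated child of $w$ \emph{upward to} $w$, whether or not that child is in $I$ (the set $S^{\beta-1}$). Without this your $B$ need not be maximal: in the path $w_0v_1w_1v_2$ rooted at $w_0$ with $I=\{v_2\}$ matched to $w_1$, the vertex $v_1\notin I$ has all its children matched, hence is ``considered'' and left unmatched by your rule, yet $\{v_1,v_2\}$ is matchable; so your $B=\{v_2\}$ is not maximal. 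Second, the existence of the displaced vertex $r_v$ --- equivalently, that the downward exchange path from a freshly saturated $v\in I$ terminates at a matched vertex outside $I$ rather than descending along an infinite ray --- is exactly the inductive statement the paper labels $A(\beta)$, proved by attaching to each vertex of the path the ordinal at which it became saturated and showing that these ordinals strictly decrease. You flag this as an obstacle but do not resolve it, and it is not automatic.

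Third, the maximality argument is both incomplete and based on a false structural claim: an augmenting $m^B$-alternating path from $u\in V\setminus B$ need not ``run monotonically toward the root''; like any path in a tree it may ascend and then descend, or descend from the start (e.g.\ $u$, a matched child $w_0$ of $u$, the partner $v_1$ of $w_0$, a matched child $w_1$ of $v_1$, and so on). The paper rules out augmentation with two separate claims: the path is finite (again by the decreasing-ordinal argument applied to its descending tail), and it cannot end at an exposed vertex of $W$ (by propagating the partition of $V$ into $I^0$, the considered vertices $C$ and the unconsidered vertices $U$ along the path to reach a contradiction). Neither of these is supplied by your sketch, and you yourself identify them as ``the main obstacle''. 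So while the architecture matches the paper's, the proposal does not yet constitute a proof.
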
 
\begin{proof}
To prove that $M_T(G)$ is a matroid, it suffices to prove that (IM) holds. Let an independent set $I\subseteq X\subseteq V$ be given. Without loss of generality, we may assume that $X=V$. 

We start by introducing some notations. 
Root $G$ at some vertex in $W$. 
Given an ordinal $\alpha$ 
and a matching $m^\alpha$, let
$I^\alpha := V(m^\alpha) \cap V$ and $W^\alpha := V(m^\alpha) \cap W$. 
Given a sequence of matchings $(m^{\alpha'}: \alpha'\leq \alpha)$,
let \begin{align*}
\label{eqn:Cbeta}
 C^{\alpha} := \{ v\in V\- I^\alpha: N^\downarrow(v)\subseteq W^\alpha \mbox{ but } N^\downarrow(v)\not\subseteq W^{\alpha'}\  \forall \alpha' < \alpha\}.
 \end{align*}
Note that $C^\alpha \cap C^{\alpha'} = \emptyset$ for $\alpha'\neq \alpha$. 
For each $w\in W\-W^\alpha$, choose one vertex $v_w$ in $N^\downarrow(w)\cap C^\alpha$ if it is not empty. Let 
\[S^\alpha := \{ v_w: w\in W\- W^\alpha \mbox{ and } N^\downarrow(w)\cap C^\alpha\neq \emptyset\}.\]

Denote the following statement by $A(\alpha)$: 
\begin{quote}
There is a pairwise disjoint collection $\Pcal^\alpha:=\{ P_v: v\in I\cap C^\alpha\- S^\alpha\}$ of $m^\alpha$--alternating paths such that each $P_v$ starts from $v\in I\cap C^\alpha\- S^\alpha$ with a downward edge and ends at the first vertex $r_v$ in $I^\alpha\- I$. 
\end{quote}

Start the inductive construction with $m^0$, which is the set of upward edges that is contained in every matching of $I$. 
It is not hard to see that $C^0\cap I=\emptyset$, so that $A(0)$ holds trivially. 

Let $\beta>0$. 
Given the constructed sequence of matchings $(m^\alpha: \alpha < \beta)$, suppose that $A(\alpha)$ holds for each $\alpha< \beta$. 
Construct a matching $m^\beta$ as follows.

If $\beta$ is a successor ordinal, 
let 
\begin{align*}
 m^{\beta} := E(S^{\beta-1}, N^\uparrow(S^{\beta-1})) \cup (m^{\beta-1} \Delta 
E(\Pcal^{\beta-1})).
\end{align*}
By $A(\beta-1)$, the paths in $\Pcal^{\beta-1}$ are disjoint. So $m^{\beta-1} \Delta E(\Pcal^{\beta-1})$ is a matching. 
Using the definition of $S^{\beta-1}$, we see that $m^{\beta}$ is indeed a matching. 
Observe also that  
\begin{eqnarray}
 \label{eqn:Iincreasing0} 
 I^{\beta-1}\cap I & \subseteq & I^{\beta}\cap I;\\
 \label{eqn:Wincreasing0}
 W^{\beta-1} & \subseteq & W^{\beta-1} \cup N^\uparrow(S^{\beta-1})=W^{\beta}.
\end{eqnarray}

If $\beta$ is a limit ordinal, define $m^\beta$ by
\begin{align}
\label{eqn:mLimit}
e\in m^\beta \iff \exists \beta' < \beta \mbox{ such that } e\in m^\alpha\  \forall \alpha \mbox{ with } \beta'\leq \alpha< \beta.
\end{align}
As $m^\alpha$ is a matching for every ordinal $\alpha < \beta$, we see that $m^\beta$ is a matching in this case, too. 

\medskip
Suppose that a vertex $u\in (V\cap I)\cup W$ is matched to different vertices by $m^\alpha$ and $m^{\alpha'}$ for some $\alpha, \alpha'\leq  \beta$. 
Then there exists some ordinal $\alpha''+1$ between $\alpha$ and $\alpha'$ such that $u$ is matched by an upward $m^{\alpha''}$--edge and by a downward $m^{\alpha''+1}$--edge. Hence, the change of the matching edges is unique. This implies that for any $\alpha, \alpha'$ with $\alpha\leq \alpha' \leq \beta$, by \eqref{eqn:Iincreasing0} and \eqref{eqn:Wincreasing0}, we have
\begin{eqnarray}
\label{eqn:Iincreasing}
I^\alpha \cap I & \subseteq & I^{\alpha'}\cap I;\\
\label{eqn:Wincreasing}
W^\alpha & \subseteq & W^{\alpha'}.
\end{eqnarray}
Moreover, for an upward $m^\beta$--edge $vw$ with $v\in V$, we have 
\begin{align}
\label{eqn:UpwardEdgeStays}
v \in I^0 \mbox{ or } \exists \alpha < \beta \mbox{ such that } v\in C^\alpha \mbox{ and } w\notin W^{\alpha}.
\end{align}

We now prove that $A(\beta)$ holds. 
Given $v_0=v\in I\cap C^\beta \- S^\beta$, 
we construct a decreasing sequence of ordinals starting from $\beta_0 := \beta$.  
For an integer $k\geq 0$, suppose that $v_k \in I\cap C^{\beta_k}$ with $\beta_k\leq \beta$ is given. 
By \eqref{eqn:Iincreasing}, $I^0\subseteq I^{\beta_k}$, so $v_k\notin I^0$ and hence  
there exists $w_k \in N^\downarrow(v_k)\- W^0$.\footnote{For a vertex $v\notin I$, $N^\downarrow(v)\setminus W^0$ may be empty.} Since $N^{\downarrow}(v_k)\subseteq W^{\beta_k}\subseteq W^\beta$, $w_k$ is matched by $m^\beta$ to some vertex $v_{k+1}$. In fact, 
as $w_k\notin W^0$, $v_{k+1}\notin I^0$. 
Let $\beta_{k+1}$ be the ordinal with $v_{k+1} \in C^{\beta_{k+1}}$. Since $v_{k+1}w_k$ is an upward edge and $N^\downarrow(v_k)\subseteq W^{\beta_k}$, we have by \eqref{eqn:UpwardEdgeStays} that $w_k\in W^{\beta_k}\- W^{\beta_{k+1}}$. By \eqref{eqn:Wincreasing}, $\beta_k >\beta_{k+1}$.

As there is no infinite decreasing sequence of ordinals, we have an $m^\beta$--alternating path $P_v=v_0 w_0 v_1 w_1 \cdots$ that stops at the first vertex $r_v \in V \- I$.

The disjointness of the $P_v$'s follows from that every vertex has a unique upward neighbour and, as we just saw, that $\stackrel{\circ}{v}\!\! P_v$ cannot contain any vertex $v' \in C^\beta$. So $A(\beta)$ holds. 

We can now go onwards with the construction. 

\medskip
Let $\gamma \leq |V|$\footnote{For example, fix a well ordering of $V$ and map each $\beta$ to the least element in $C^\beta$.} be the least ordinal such that $C^\gamma = \emptyset$. Let $C:=\bigcup_{\alpha<\gamma} C^\beta$ and $U:=V\- (I^0 \cup C)$; so $V$ is partitioned into $I^0, C$ and $U$. As $C^\gamma =\emptyset$, every vertex in $U$ can be matched downwards to a vertex that is not in $W^\gamma$. These edges together with $m^\gamma$ form a matching $m^B$ of $B:=U\cup I^\gamma$, which we claim to be a witness for (IM). 
By \eqref{eqn:Iincreasing}, $I^0\cup (C\cap I)\subseteq I^\gamma$, hence, $I\subseteq B$.

Suppose $B$ is not maximally independent for a contradiction. Then there is an $m^B$--alternating path $P=v_0w_0v_1w_1\cdots$ such that $v_0\in V\- B$ that is either infinite or ends with some $w_n\in W\- V(m^B)$. We show that neither occurs.

\begin{clm}
\label{thm:Pfinite}
$P$ is finite. 
\end{clm}
\begin{proof}
Suppose $P$ is infinite. Since $v_0\notin B$, $P$ has a subray $R=w_iP$ such that $w_iv_{i+1}$ is an upward $m^B$--edge. So  $w_j v_{j+1}\in m^B$ for any $j\geq i$. As vertices in $U$ are matched downwards, $R\cap U=\emptyset$. As $m^B\Delta E(R)$ is a matching of $B\supseteq I$ in which every vertex in $R\cap V$ is matched downwards, $R\cap I^0 = \emptyset$ too. 
So for any $j\geq i$, there exists a unique $\beta_j$ such that $v_j\in C^{\beta_j}$. 

Choose $k\geq i$ such that $\beta_k$ is minimal. But with a similar argument used to prove $A(\beta)$, we have $\beta_k > \beta_{k+1}$. Hence $P$ cannot be infinite.
\end{proof}

\begin{clm}
\label{thm:Pends}
$P$ does not end in $W\- V(m^B)$. 
\end{clm}
\begin{proof}
Suppose that $P$ ends with $w_n\in W\-V(m^B)$. 
Certainly, $v_n$ can be matched downwards (either to $w_{n-1}$ or $w_n$) in a matching of $B\supseteq I$. Hence, $v_n\notin I^0$. 
It is easy to check that for $v\in C^\alpha$, $N(v)\subseteq W^{\alpha+1}$. Hence, as $w_n\in W\- W^\gamma$, $v_n\notin C$. 
Hence, $v_n\in U$. It follows that for each $0<i\leq n$, $v_i$ is matched downwards and so does not lie in $I^0$. 
As $v_0\notin B$, $v_0\in C$. It follows that $w_0\in W^\gamma$ and $v_1\in C$. Repeating the argument, we see that $v_n\in C$, which is a contradiction. 
\end{proof}
We conclude that $B$ is maximal. So (IM) holds and $M_T(G)$ is a matroid. 
\end{proof}

\begin{cor}
\label{thm:dimazeTree}
Let $(D, B_0)$ be a dimaze such that the underlying graph of $D$ is a tree and $B_0$ is a vertex class of a bipartition of $D$ with edges directed towards $B_0$. Then $M_L(D, B_0)$ is a matroid. 
\end{cor}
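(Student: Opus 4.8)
The plan is to exhibit $M_L(D,B_0)$ directly as a transversal matroid $M_T(\hat G)$ for a suitable tree $\hat G$, and then to invoke \autoref{thm:transversalTree}. Write $A := V(D)\setminus B_0$, so the hypotheses make $(A,B_0)$ a bipartition of $D$ in which every edge is directed from $A$ to $B_0$. The first observation is that, because every vertex of $B_0$ has out-degree $0$, any directed path of $D$ starting in $A$ is forced to be a single edge from $A$ into $B_0$; hence a linkage from a set $I$ consists precisely of trivial paths on $I\cap B_0$ together with a matching, in the underlying tree $G$, of $I\cap A$ into $B_0\setminus(I\cap B_0)$. Equivalently, $I$ is linkable if and only if $I\cap A$ can be matched into $B_0$ by a matching avoiding $I\cap B_0$ on the $B_0$-side.

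To encode the ``an exit reserves itself'' phenomenon as ordinary matchability, I would build $\hat G$ from $G$ by attaching, for every $b\in B_0$, a new pendant leaf $\ell_b$ adjacent to $b$. I orient the resulting bipartition as ground class $\hat V := A\cup\{\ell_b : b\in B_0\}$ and target class $W := B_0$; identifying each pendant $\ell_b$ with the element $b$ gives $\hat V\cong V(D)$. Since attaching leaves to a tree again yields a tree, $\hat G$ is a tree and $(\hat V,W)$ is a genuine ordered bipartition of it.

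The key step is to prove $M_L(D,B_0)=M_T(\hat G)$, i.e.\ that $I\subseteq V(D)$ is linkable exactly when the corresponding subset of $\hat V$ is matchable into $W$. For the forward direction I would turn a linkage into a matching of $\hat G$ by sending each $a\in I\cap A$ to its linkage-target in $B_0=W$ and each $b\in I\cap B_0$ to $b$ itself along the pendant edge $\ell_b b$; disjointness of the linkage paths makes all these targets distinct. Conversely, in any matching of $\hat G$ each chosen pendant $\ell_b$ is forced (it has the single neighbour $b$) to occupy the target $b$, so the $A$-part of the matching must avoid exactly the chosen exits, which is precisely the condition for a linkage. Once this identity holds, \autoref{thm:transversalTree} applied to the tree $\hat G$ shows that $M_T(\hat G)$, and hence $M_L(D,B_0)$, is a matroid.

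I expect the only real subtlety to be the correspondence $M_L(D,B_0)=M_T(\hat G)$, and in particular the decision to present $M_L(D,B_0)$ \emph{directly} as a transversal matroid rather than through the classical finite fact that strict gammoids are duals of transversal matroids. That duality does not survive into the infinite setting: on the comb over a double ray one checks that the transversal matroid of the naive ``dual'' tree is genuinely different from $M_L(D,B_0)$, so a direct transversal presentation is essential. The pendant-leaf device is exactly what makes the presentation direct, and verifying that it faithfully reproduces the reservation of exits is the heart of the argument; checking that $\hat G$ is a tree is then immediate.
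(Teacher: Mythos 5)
Your proposal is correct and follows essentially the same route as the paper: the pendant leaves $\ell_b$ are exactly the paper's copy $B_0'$ of $B_0$, giving the bipartite tree with classes $\bigl((V\setminus B_0)\cup B_0',\,B_0\bigr)$, and the identification $M_L(D,B_0)\cong M_T(\hat G)$ followed by an appeal to \autoref{thm:transversalTree} is precisely the paper's argument. Your verification of the correspondence is more detailed than the paper's ``it can be easily checked,'' but the construction and the key lemma invoked are the same.
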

\begin{proof}
By the theorem, we need only present $M_L(D, B_0)$ as a transversal matroid defined on a tree. Define a tree $G$ with bipartition $((V\-B_0)\cup B_0', B_0)$, where $B_0'$ is a copy of $B_0$, from $D$ by ignoring the directions and joining each vertex in $B_0$ to its copy with an edge. It can be easily checked that $M_L(D, B_0)\cong M_T(G)$.
\end{proof}

Consider the countably infinite branching rooted tree, i.e.~a rooted tree such that each vertex has countably many children. Let $B_0$ consist of the root and vertices on every other level. Define $\Tcal$ by directing all edges towards $B_0$. 
\autoref{thm:dimazeTree} shows that $M_L(\Tcal, B_0)$ is a matroid. Clearly, this matroid does not contain any finite circuit. Moreover, as any finite set $C^*$ misses a base obtained by adding finitely many vertices to $B_0\- C^*$, any cocircuit must be infinite. 
We remark that this matroid is self-dual. With \autoref{thm:noFiniteCC}, we conclude the following.

\begin{cor}
Every dimaze that defines a strict gammoid isomorphic to $M_L(\Tcal, B_0)$ contains an AC.
\end{cor}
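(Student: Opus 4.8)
The plan is to combine the two main structural results established earlier: \autoref{thm:noFiniteCC}, which asserts that any dimaze without an AC gives a linkability system containing a finite circuit or cocircuit, and the self-duality together with the circuit/cocircuit analysis of $M_L(\Tcal, B_0)$. The corollary is a contrapositive statement, so I would argue: if some dimaze $(D, B_0)$ defines a strict gammoid isomorphic to $M_L(\Tcal, B_0)$ and contains no AC, then by \autoref{thm:noFiniteCC} this matroid has a finite circuit or a finite cocircuit. The whole strategy is then to derive a contradiction by showing $M_L(\Tcal, B_0)$ has neither.

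First I would verify that $M_L(\Tcal, B_0)$ has no finite circuit. Since $\Tcal$ is obtained from the countably branching rooted tree by directing all edges towards $B_0$, and $B_0$ consists of the root and every other level, any finite independent set extends to a linkage; in fact every finite subset of $V$ is linkable because the tree branches infinitely often, so there are always enough disjoint escape paths to the exits. Thus every finite set is independent, which immediately rules out finite circuits. Second, I would show there is no finite cocircuit, equivalently (passing to the dual) that the dual matroid has no finite circuit. This is exactly the point made in the surrounding text: any finite set $C^*$ can be avoided by a base obtained by adding finitely many vertices to $B_0 \setminus C^*$, so $C^*$ is coindependent, and hence no finite set is a cocircuit. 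Here I would lean on \autoref{thm:dimazeTree} to know $M_L(\Tcal, B_0)$ genuinely is a matroid, so that ``cocircuit'' and duality are well defined.

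With both claims in hand, the contradiction is immediate: a matroid isomorphic to $M_L(\Tcal, B_0)$ would have to contain a finite circuit or cocircuit if it came from an AC-free dimaze, but isomorphism preserves circuits and cocircuits (these are defined purely in terms of the independence structure), and $M_L(\Tcal, B_0)$ has none of finite size. Hence no AC-free dimaze can define it, which is the desired statement. I would phrase the final paragraph to make the use of isomorphism-invariance explicit, since that is the step tying the intrinsic property (no finite circuit or cocircuit) to the extrinsic hypothesis (AC-free defining dimaze).

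The main obstacle I anticipate is not in the logical skeleton, which is short, but in cleanly justifying the no-finite-cocircuit claim: one must argue that \emph{every} finite set is coindependent, i.e. that its complement still contains a base. This requires exhibiting, for an arbitrary finite $C^*$, a linkage onto $B_0$ from a set disjoint from $C^*$ that is maximal; the infinite branching is what guarantees enough room, but making the extension argument precise (adding finitely many vertices to $B_0 \setminus C^*$ and checking it yields a base) is the step needing care. The remark that the matroid is self-dual, if invoked, would let me deduce the cocircuit statement from the circuit statement and halve the work, but I would still need to confirm self-duality rather than merely assert it.
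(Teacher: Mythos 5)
Your proposal is correct and takes essentially the same route as the paper: it records that $M_L(\Tcal, B_0)$ has no finite circuit and no finite cocircuit (the latter because any finite set misses a base obtained by adding finitely many vertices to $B_0\setminus C^*$) and then applies \autoref{thm:noFiniteCC} together with the isomorphism-invariance of circuits and cocircuits. The paper's own argument is exactly this, with self-duality mentioned only as an aside rather than used.
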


\end{document}